\documentclass[12pt]{amsart}
\usepackage{amsfonts}

\usepackage[margin=1.05in]{geometry}
\usepackage{amsmath,amssymb,amsthm,mathtools}
\usepackage[colorlinks=true,linkcolor=blue,citecolor=blue,urlcolor=blue]{hyperref}
\newtheorem{definition}{Definition}
\newtheorem{conjecture}{Conjecture}
\newtheorem{theorem}{Theorem}

\newtheorem{lemma}{Lemma}

\newtheorem{remark}{Remark}

\newcommand{\ddbar}{i\partial\bar\partial}
\newcommand{\PSH}{\operatorname{PSH}}
\newcommand{\Exc}{\operatorname{Exc}}
\newcommand{\tr}{\operatorname{tr}}

\newcommand{\Amp}{\operatorname{Amp}}

\newcommand{\codim}{\operatorname{codim}}
\newcommand{\Bl}{\operatorname{Bl}}
\newcommand{\NullJ}{\operatorname {Dest}^{opt}}

\date{\today}

\begin{document}
\title{On the Datar--Mete--Song Minimal Slope Conjecture}
\author{Xin Fu}

\begin{abstract}
We prove a conjecture of Datar-Mete-Song \cite{DMS} characterizing
$J$-slope semi-stability by the minimal $J$-slope.  More precisely, for
a semi-stable pair of K\"ahler classes $(\alpha,\beta)$ on a compact K\"ahler manifold $X$, every big and nef birational test
class has slope at least the topological $J$-slope, whereas an unstable
pair admits a test class with strictly smaller slope.  We also  prove that  the set of optimally destabilizing subvarieties of a semi-stable pair $(\alpha,\beta)$  is finite if $X$ is a
compact toric K\"ahler manifold.
In the toric invariant case, we show that Murakami's \cite{Murakami} weak solution to  the $J$-equation is smooth and
K\"ahler on the dense big torus $(\mathbb{C}^*)^n$ of $X$.  
\end{abstract}
\maketitle

\section{Introduction}
The $J$-equation is the critical equation of the $J$-flow introduced by Donaldson \cite{Donaldson}
in the framework of moment maps.
Let $X$ be a compact K\"ahler manifold of complex dimension $n$, and
let $\alpha,\beta$ be K\"ahler classes.  For a K\"ahler form
$\omega\in\beta$, the $J$-equation asks for a K\"ahler form
$\chi\in\alpha$ such that
$$
 n\,\chi^{n-1}\wedge\omega
 =
 \mu(\alpha,\beta)\,\chi^n,
 \qquad
 \mu(\alpha,\beta)
 =
 n\,\frac{\alpha^{n-1}\cdot\beta}{\alpha^n}.
$$
After that both the $J$-equation and the $J$-flow are extensively studied in \cite{Chen1,Chen2,Weinkove1,Weinkove2,FLSW,SongWeinkove,CS,LS,Sze,FL,Chen}. The analytic work of Weinkove and
Song--Weinkove established convergence under suitable pointwise positivity
assumptions \cite{Weinkove1,SongWeinkove}.  In parallel,
Lejmi--Sz\'ekelyhidi \cite{LS} formulated numerical slope conditions and conjectured that the solvability of $J$-equation is equivalent to certain numerical stability condition. Collins-Sz\'ekelyhidi \cite{CS} proved the conjecture in the toric case, and in the general case, Chen \cite{Chen} made a breakthrough by showing that uniform stability implies the existence of a smooth solution to the $J$-equation.   By further extending Chen's work, Datar-Pingali \cite{DatarPingali} and Song \cite{Song} proved Lejmi-Sz\'ekelyhidi conjecture in full generality in the projective case and K\"ahler case separately. There are also many other interesting recent studies related to the $J$-equations, c.f. \cite{CLT,To,FM,GuoSong,SD1,SD2,SD3}.

Now it is well understood that the stable theory is now governed by strict positivity on every
proper analytic subvariety. The recent pioneering work of Datar-Mete-Song \cite{DMS} aim to study the degenerate $J$-equation by going beyond the stable case, where the $J$-equation has a smooth solution,
to the semi-stable case and even the unstable case.  Let us introduce some necessary definitions to further illustrate their expectations.

Let $X$ be a compact K\"ahler manifold of dimension $n$.  Let $\alpha,\beta$ be
K\"ahler classes.  Put
$$
  \mu=\mu(\alpha,\beta):=
  n\,\frac{\alpha^{n-1}\cdot \beta}{\alpha^n}.
$$
For every irreducible analytic subvariety $Z\subset X$ of dimension
$1\le d<n$, put
$$
  \mu_Z=\mu_Z(\alpha,\beta)
  :=d\,\frac{\alpha^{d-1}\cdot\beta\cdot Z}{\alpha^d\cdot Z}.
$$
\begin{definition}[\cite{DMS}]
A pair of K\"ahler class
 $(\alpha,\beta)$ is \textbf{semi-stable} if
for every irreducible analytic subvariety $Z\subset X$ of dimension
$1\le d<n$,
$$
  d\,\alpha^{d-1}\cdot \beta\cdot Z
  \le
  \mu\,\alpha^d\cdot Z.
$$
\end{definition}
Motivated by the surface case, Datar-Mete-Song introduced the so called minimal slope as a numerical invariant to characterize the semi-stability of a pair of K\"ahler classes $(\alpha,\beta)$, which we recall below.

Let  $\pi:Y\to X$ be a birational modification from a compact K\"ahler manifold $Y$ to $X$ and $D$ be an
effective $\mathbb R$-divisor on $Y$, whose cohomology class is
again denoted by $[D]$, such that
$$
  L=\pi^*\alpha-[D]
$$
is a big and nef class, define the slope associated to $(\pi,Y,D)$ as
$$
  \mu_L(\alpha,\beta):=
  n\,\frac{L^{n-1}\cdot\pi^*\beta}{L^n}.
$$
Then the \textbf{minimal slope} is 
\begin{equation}\label{def:minislope}
  \zeta_{\min}(\alpha,\beta):=
  \inf_{\pi,D}\mu_L(\alpha,\beta),
\end{equation}
where the infimum is taken over all such $(\pi,Y, D)$.  Since the test
$(\pi,Y,D)=(\operatorname{id}_X,X,0)$ is admissible, trivially, one has
$$
  \zeta_{\min}(\alpha,\beta)\le \mu.
$$

\begin{conjecture}[Datar--Mete--Song]\label{conj:1}
The semi-stability of the K\"ahler pair $(\alpha,\beta)$ is equivalent to
$$
  \zeta_{\min}(\alpha,\beta)=\mu.
$$
\end{conjecture}
Datar-Mete-Song  confirmed the conjecture in complex dimension two  \cite{DMS} by transforming the J-equation to the complex M\"onge-Ampere equation.
They also proposed the following more ambitious conjecture for arbitrary
pairs, including the unstable case.
\begin{conjecture}[Datar-Mete-Song]\label{conj:weak}
Let $X$ be an $n$-dimensional compact K\"ahler manifold,
and let $\alpha,\beta\in H^{1,1}(X,\mathbb{R})$ be two
K\"ahler classes. For any K\"ahler form $\omega\in\beta$,
there exists a unique K\"ahler current $T\in\alpha$  satisfying
the  $J$-equation
$$
    n\left\langle T^{n-1}\wedge\omega\right\rangle
    =
    \zeta_{\min}\left\langle T^n\right\rangle,
$$
where $\zeta_{\min}=\zeta_{\min}(\alpha,\beta)$ is the
minimal $J$-slope and $\langle\,\cdot\,\rangle$ denotes
the non-pluripolar product.
\end{conjecture}

The main result of this paper is to confirm Conjecture~\ref{conj:1}.
\begin{theorem}
\label{thm:semistable}
Let $(\alpha,\beta)$ be a pair of K\"ahler classes on a K\"ahler manifold of complex dimension $n$.
\begin{itemize}\item[(1)] If  $(\alpha,\beta)$ is semi-stable, then for every birational modification
$\pi:Y\to X$ from a compact K\"ahler manifold $Y$ and every effective
$\mathbb R$-divisor $D$ such that
$L=\pi^*\alpha-[D]$ is big and nef, one has
$$
  n\,L^{n-1}\cdot\pi^*\beta
  \ge
  \mu\,L^n.
$$ Therefore,
$$
  \zeta_{\min}(\alpha,\beta)=\mu.
$$

\item[(2)] If $(\alpha,\beta)$ is not semi-stable, then
$$
  \zeta_{\min}(\alpha,\beta)<\mu.
$$
\end{itemize}
\end{theorem}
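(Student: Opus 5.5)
For part (2), which is the easier direction, I would use a deformation to the normal cone. Suppose $Z\subset X$ is irreducible of dimension $d<n$ and satisfies $d\,\alpha^{d-1}\cdot\beta\cdot Z>\mu\,\alpha^d\cdot Z$.
\begin{itemize}
\item Let $\pi:Y\to X$ be a log resolution of the ideal of $Z$: a composition of blow-ups along smooth centres, so that $Y$ is compact K\"ahler. Let $E$ be the effective exceptional divisor with $\pi^{-1}\mathcal I_Z\cdot\mathcal O_Y=\mathcal O_Y(-E)$, and arrange that $-E$ is $\pi$-ample.
\item Then $L_t=\pi^*\alpha-tE$ is K\"ahler, hence big and nef, for $0<t\ll1$, so $(\pi,Y,tE)$ is an admissible test.
\item Expanding in $t$, the first nonzero corrections to $L_t^n$ and to $L_t^{n-1}\cdot\pi^*\beta$ appear in order $t^{\,n-d}$. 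Their coefficients are $-c_{n,d}\,\alpha^d\cdot Z$ and $-c'_{n,d}\,\alpha^{d-1}\cdot\beta\cdot Z$, where $c_{n,d},c'_{n,d}>0$ are combinatorial constants. They come from the top Segre class of $Z$ (its multiplicity) and are the same constants that occur in the Lejmi--Sz\'ekelyhidi and Ross--Thomas slope computations.
\item With the correct binomial factors this gives
$$\mu_{L_t}-\mu=\frac{C\,t^{n-d}}{\alpha^n}\bigl(\mu\,\alpha^d\cdot Z-d\,\alpha^{d-1}\cdot\beta\cdot Z\bigr)+O(t^{n-d+1}),\qquad C>0.$$
This is negative for small $t$, so $\zeta_{\min}<\mu$.
\end{itemize}
The one point to verify carefully is the ratio of the constants $c_{n,d}$ and $c'_{n,d}$, which must produce exactly the factor $d$. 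I would check it first when $Z$ is smooth, using the explicit formula for $E^{k}$ on the blow-up.

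For part (1) I would perturb to the stable case and then integrate a derivative along a line of classes. Fix $\varepsilon>0$ and put $c=\mu+\varepsilon$. Semi-stability gives $(c\,\alpha^d-d\,\alpha^{d-1}\cdot\beta)\cdot Z\ge\varepsilon\,\alpha^d\cdot Z>0$ for every proper $Z$, and at the top level $c\,\alpha^n-n\,\alpha^{n-1}\cdot\beta=\varepsilon\,\alpha^n>0$. The Chen/Song/Datar--Pingali solution of the Lejmi--Sz\'ekelyhidi conjecture holds for an arbitrary constant $c$ satisfying this uniform positivity. It therefore yields a K\"ahler form $\chi\in\alpha$ in the $J$-cone:
$$c\,\chi^{n-1}-(n-1)\chi^{n-2}\wedge\omega>0 .$$
Given a test $L=\pi^*\alpha-D$, consider the path $L_t=\pi^*\alpha-tD$ for $t\in[0,1]$. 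Each $L_t$ is nef and big, since it is a convex combination of $\pi^*\alpha$ and $L$. Set
$$g(t)=c\,L_t^n-n\,L_t^{n-1}\cdot\pi^*\beta .$$
Then $g(0)=\varepsilon\,\alpha^n$ and
$$g'(t)=-n\,D\cdot\bigl(c\,L_t^{n-1}-(n-1)L_t^{n-2}\cdot\pi^*\beta\bigr).$$
If the bracket is nonnegative on each component of $D$, then $g(1)\le\varepsilon\alpha^n$. Letting $\varepsilon\to0$ gives $\mu L^n-nL^{n-1}\cdot\pi^*\beta\le0$, which is the claim.

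The main obstacle is exactly this nonnegativity: I need $\bigl(c\,L_t^{n-1}-(n-1)L_t^{n-2}\cdot\pi^*\beta\bigr)\cdot D_i\ge0$ for each component $D_i$ of $D$. My plan is an induction on dimension. I would prove, for every irreducible $V\subset Y$ of dimension $k$ and every nef class of the form $L=\pi^*\alpha-D$, that
$$\bigl(c\,L^{k}-k\,L^{k-1}\cdot\pi^*\beta\bigr)\cdot V\le\bigl(c\,\alpha^{k}-k\,\alpha^{k-1}\cdot\beta\bigr)\cdot\pi_*V ,$$
and separately that the left side is nonnegative. Nonnegativity would be proved analytically. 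Restrict $\pi^*\chi$ and $\pi^*\omega$ to $V$. Represent $L|_V$ by Demailly-regularised closed positive currents $T_j$ with analytic singularities in $\{\pi^*\chi\}-D$. Then use the pointwise inequality that the $J$-cone condition gives for non-pluripolar products on the regular part: at each point, $\chi$ dominates the $(k-1)$-dimensional $J$-cone condition on any $k$-dimensional tangent subspace. Mass only decreases under non-pluripolar products, which handles the comparison with $L$.

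If the current argument fails, I would fall back on the toric or surface case of Datar--Mete--Song to calibrate the inequality. I would then try instead to prove convexity of $t\mapsto n\,L_t^{n-1}\cdot\pi^*\beta/L_t^n$ directly via Khovanskii--Teissier type inequalities.
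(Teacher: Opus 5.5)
Your part (2) is essentially the paper's argument: blow up $\mathcal I_Z$, resolve, take $L_t=\pi^*\alpha-tE$, and expand to order $t^{n-d}$. The paper uses the support theorems to show $\pi_*(E^j)=0$ for $j<n-d$ and $\pi_*(E^{n-d})=(-1)^{n-d-1}[Z]$. This gives $C=\binom{n}{d}$, and the factor $d$ comes from $\binom{n-1}{n-d}/\binom{n}{n-d}=d/n$. One correction: on a smooth $Y$ you cannot in general make $-E$ itself $\pi$-ample, because $E$ is pulled back from $\Bl_{\mathcal I_Z}X$. Admissibility only needs $L_t$ nef and big, and that holds because $L_t$ is pulled back from a K\"ahler class on the blow-up.

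Part (1) has a genuine gap, and it is not a technicality. The sign condition you single out, $D_i\cdot\bigl(cL_t^{n-1}-(n-1)L_t^{n-2}\cdot\pi^*\beta\bigr)\ge0$, is false. So $g$ is not monotone, and no derivative-sign argument along $L_t=\pi^*\alpha-tD$ can work.

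A counterexample: take $X=\mathbb P^2$, $\alpha=\beta=[H]$ (so $\mu=2$, $c=2+\varepsilon$, and the pair is even stable), $\pi=\mathrm{id}$, and $D=sC$ with $C$ a line and $1-1/c<s<1$. Then $L_t=(1-ts)H$ and $g(t)=c(1-ts)^2-2(1-ts)$, so $g'(t)=2s\bigl(1-c(1-ts)\bigr)>0$ for $t$ near $1$. The endpoint inequality $g(1)\le g(0)$ holds, but monotonicity does not.

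Your induction hypotheses also fail:
\begin{itemize}
\item For the exceptional divisor $E$ of a point blow-up and $L=\pi^*\alpha-tE$ with small $t>0$, one has $\bigl(cL^{n-1}-(n-1)L^{n-2}\cdot\pi^*\beta\bigr)\cdot E=ct^{n-1}>0$. The right side of your comparison vanishes because $\pi_*E=0$.
\item On $\Bl_p\mathbb P^2$ with $L=\pi^*H-tE$, the strict transform $V$ of a line through $p$ has $(cL-\pi^*\beta)\cdot V=c(1-t)-1<0$ for $t$ near $1$.
\end{itemize}
The analytic sketch cannot repair this. The $J$-cone inequality is a pointwise property of the particular form $\chi$, and it gives no pointwise control over other currents representing $L|_V$.

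The idea you are missing is the paper's envelope and contact-set mechanism.
\begin{itemize}
\item Normalize $\mu=n$ and perturb to the stable pair $(\alpha,\beta_\varepsilon)$ with $\beta_\varepsilon=(1-\varepsilon)\beta+\varepsilon\alpha$. Use the $J$-equation solution $\omega_\varepsilon\in\alpha$, so that $\tr_{\omega_\varepsilon}\chi_\varepsilon=n$.
\item Set $\theta=\pi^*\omega_\varepsilon$, $\Theta=\pi^*\chi_\varepsilon$, and $u=\sup\{v\in\PSH(Y,\theta):v\le0,\ v\le\varphi_D+O(1)\}^*$.
\item Then $T=\theta+\ddbar u=[D]+R$, where $R$ has minimal singularities in the nef class $L$. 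So the non-pluripolar products give exactly $\int_Y\langle T^n\rangle=L^n$ and $\int_Y\Theta\wedge\langle T^{n-1}\rangle=\pi^*\beta_\varepsilon\cdot L^{n-1}$.
\item By McCleerey's $C^{1,1}$ regularity, $\langle T^n\rangle$ is carried by the contact set $\{u=0\}$, where $0\le T\le\theta$. There the eigenvalues of $T$ relative to $\theta$ lie in $[0,1]$, and $\tr_\theta\Theta=n$ gives $\Theta\wedge T^{n-1}\ge T^n$ pointwise.
\item Integrating yields $\pi^*\beta_\varepsilon\cdot L^{n-1}\ge L^n$, and letting $\varepsilon\to0$ finishes.
\end{itemize}
It is the upper bound $T\le\theta$ on the support of the Monge--Amp\`ere measure that turns the trace equation into a lower bound on the slope of $L$. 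Nothing in your proposal plays that role.
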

\begin{remark}
Item ($2$) of the above theorem was proved by Datar-Mete-Song when $X$ is a projective manifold and both $(\alpha,\beta)$ are polarized K\"ahler classes.\end{remark} In terms of existence of weak solution of $J$-equation in the semi-stable case, 
Murakami's
recent paper \cite{Murakami} confirms the conjecture ~\ref{conj:weak} of Datar-Mete-Song
and the more challenging unstable case remains open. 
\begin{theorem}[Murakami \cite{Murakami}]\label{thm:Rei}
Let $(\alpha,\beta)$ be a semi-stable pair of K\"ahler classes and also let  $\chi\in\alpha$ and 
$\omega\in\beta$ be K\"ahler forms,  then there exists a  closed positive current
$T\in\alpha$ satisfies
$$
 n\left\langle T^{n-1}\wedge\omega\right\rangle
 =
 \mu\left\langle T^n\right\rangle.
$$
Here $\langle\cdot\rangle$ denotes the non-pluripolar product.
\end{theorem}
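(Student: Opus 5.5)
The plan is to obtain $T$ as a weak limit of smooth solutions of $J$-equations for nearby \emph{stable} pairs. The existence of those solutions comes from the Lejmi--Sz\'ekelyhidi criterion, proved by Chen and, in the K\"ahler case, by Song.

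\textbf{Step 1: perturb to a stable pair.} For $\varepsilon>0$ consider the pair $(\alpha,\beta+\varepsilon\alpha)$. For every subvariety $Z$ of dimension $d<n$ one has
$$
\mu_Z(\alpha,\beta+\varepsilon\alpha)=\mu_Z(\alpha,\beta)+\varepsilon d,
\qquad
\mu(\alpha,\beta+\varepsilon\alpha)=\mu+\varepsilon n .
$$
Semi-stability of $(\alpha,\beta)$ then gives
$$
\mu(\alpha,\beta+\varepsilon\alpha)\,\alpha^d\cdot Z-d\,\alpha^{d-1}\cdot(\beta+\varepsilon\alpha)\cdot Z\ \ge\ \varepsilon(n-d)\,\alpha^d\cdot Z\ >0 .
$$
Since $\alpha^d\cdot Z$ is bounded below by a multiple of the $\beta$-volume of $Z$, this is a uniform positivity. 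Song's theorem therefore yields a smooth K\"ahler form $\chi_\varepsilon=\chi+\ddbar\varphi_\varepsilon\in\alpha$, normalized by $\sup\varphi_\varepsilon=0$, with
$$
n\,\chi_\varepsilon^{n-1}\wedge(\omega+\varepsilon\chi)=(\mu+n\varepsilon)\,\chi_\varepsilon^n .
$$

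\textbf{Step 2: uniform estimates.} The equation says $\tr_{\chi_\varepsilon}(\omega+\varepsilon\chi)=\mu+n\varepsilon$. Every eigenvalue of $\omega$ with respect to $\chi_\varepsilon$ is therefore at most $\mu+n\varepsilon$, which gives
$$
\chi_\varepsilon\ \ge\ \frac{\omega}{\mu+n\varepsilon}\ \ge\ c\,\omega
$$
with $c>0$ independent of $\varepsilon$. By compactness of $\sup$-normalized quasi-psh functions, a subsequence satisfies $\varphi_\varepsilon\to\varphi$ in $L^1$. The limit $T=\chi+\ddbar\varphi\ge c\,\omega$ is then a K\"ahler current in $\alpha$.

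\textbf{Step 3: passing to the limit (the main obstacle).} Non-pluripolar products are not continuous under $L^1$ convergence, so the equality must be recovered by hand. I would proceed as follows.
\begin{itemize}
\item \emph{Local smooth convergence.} Use a Demailly regularization / Kodaira-lemma argument to find a proper analytic set $E$ outside which $\varphi$ is locally bounded. On compact subsets of $X\setminus E$ one should get uniform $C^0$ bounds on $\varphi_\varepsilon$, for instance by a comparison with a quasi-psh function having analytic singularities along $E$, combined with a Sz\'ekelyhidi-type $C^2$ estimate with weights. The equation $\tr_{\chi_\varepsilon}\omega'=\text{const}$ is concave in $\chi_\varepsilon$, so Evans--Krylov then gives smooth convergence on $X\setminus E$.
\item \emph{The equation on $X\setminus E$.} On $X\setminus E$ the limit is smooth and satisfies $n\,T^{n-1}\wedge\omega=\mu\,T^n$. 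The non-pluripolar products put no mass on the pluripolar set $E$, so this identity extends to the global equality of non-pluripolar measures.
\end{itemize}
If the localized $C^0$ bound fails, the fallback is a variational route. The equation is the Euler--Lagrange equation of the $J$-functional restricted to the finite-energy class $\mathcal E^1(X,\alpha)$. Semi-stability should give properness modulo the degenerate directions, and then Darvas-type compactness produces a minimizer. I expect this step to be the hardest part.
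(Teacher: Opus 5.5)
There is a genuine gap in Step~3, which carries the entire content of the theorem. Your Steps 1 and 2 are correct: $(\alpha,\beta+\varepsilon\alpha)$ is stable, and the eigenvalue bound $\chi_\varepsilon\ge\omega/(\mu+n\varepsilon)$ shows that any $L^1$-limit $T$ is a K\"ahler current in $\alpha$.

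The paper gives no proof of this statement; it is quoted from \cite{Murakami}. The only place the paper runs your scheme is Theorem~\ref{thm:toric}(2), for toric-invariant classes. There the local $C^0$ bound comes from convexity on the real torus, and the local Hessian bound comes from \cite[Proposition~27]{CS}. The paper presents partial regularity of $T$ as an open question (``it is natural to ask if partial higher-order regularity of $T$ holds''). Your first bullet asserts exactly this partial regularity, and the tools you name do not give it:
\begin{itemize}
\item Since $\alpha$ is K\"ahler, Kodaira's lemma is vacuous.
\item Demailly regularization only produces less singular approximants of $T$, with poles on Lelong upper-level sets. A quasi-psh $\varphi$ with $T\ge c\,\omega$ can still be unbounded, even $-\infty$ on a non-analytic pluripolar set, where its Lelong numbers vanish. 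So no set $E$ is produced.
\item The known $C^0$ and $C^2$ estimates for the $J$-equation rely on a subsolution satisfying the cone condition $\mu\chi'^{\,n-1}-(n-1)\chi'^{\,n-2}\wedge\omega>0$, with constants that would have to be uniform in $\varepsilon$. No smooth such subsolution exists in $\alpha$ when the pair is not stable, and a degenerate one that is smooth off a proper analytic set is not available.
\end{itemize}
In fact the approximating solutions must degenerate somewhere on every $Z$ with $\mu_Z=\mu$. If $T$ were smooth near all of $Z$, integrating the strict pointwise inequality $\tr_{T|_Z}(\omega|_Z)<\tr_T\omega=\mu$ over $Z$ would give $\mu_Z<\mu$. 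So any barrier must encode $\NullJ(\alpha,\beta)$, and the analyticity of that set is itself the paper's open Question. Nor can you quote, in general, a purely interior $C^2$ estimate for this Hessian-quotient equation that avoids a subsolution.

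The variational fallback does not close the gap either. Semi-stability yields at best a lower bound for the $J$-functional, while properness is equivalent to stability. So in the strictly semi-stable case a minimizing sequence may have unbounded $d_1$-distance, and no compactness theorem applies; ``properness modulo degenerate directions'' is not an available result. Even granted a minimizer in $\mathcal E^1(X,\alpha)$, the Euler--Lagrange equation in the non-pluripolar sense is not formal. The projection argument of Berman--Boucksom--Guedj--Zeriahi differentiates the Monge--Amp\`ere energy through the orthogonality relation $\int_X(f-Pf)\,\langle(\chi+\ddbar Pf)^n\rangle=0$. The $\omega$-twisted term instead produces the measure $\langle T^{n-1}\rangle\wedge\omega$, which is not supported on the contact set, and there is no analogous identity.

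As it stands, your argument yields a K\"ahler current $T\in\alpha$ but no equation for it. The missing ingredient is a mechanism that transfers $n\chi_\varepsilon^{n-1}\wedge(\omega+\varepsilon\chi)=(\mu+n\varepsilon)\chi_\varepsilon^n$ to the non-pluripolar products of the limit, without assuming smooth convergence on a Zariski-open set.
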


As a continuation of Murakami's work, in the semi-stable case, it is natural to ask if partial
higher-order regularity of $T$ holds.  To locate
the possible set of degeneracy, it is natural to introduce the following set of optimally destabilizing subvarieties due to Khalid-Sj\"ostr\"om-Dyrefelt.

\begin{definition}\cite{SD2}
    Let  ($\alpha,\beta$) be a semi-stable pair of K\"ahler classes. The optimally destablizing set of subvarieties is defined by
\begin{equation*}
  \NullJ(\alpha,\beta)
  :=
  \bigcup_{\substack{Z\subsetneq X\ \mathrm{irreducible}\\
          1\leq m=\dim Z\leq n-1\\
          \mu_Z=\mu}} Z.
\end{equation*}
Equivalently, $Z$ is contained in $\NullJ(\alpha,\beta)$ precisely
when
\begin{equation*}
  \bigl(\mu\alpha^m-m\alpha^{m-1}\beta\bigr)\cdot Z=0.
\end{equation*}
\end{definition}
To further explore whether partial high-order regularity holds, it seems that a positive answer of the following question of Khalid-Sj\"ostr\"om-Dyrefelt is the first step.

\textbf{Question:}
Is $\NullJ(\alpha,\beta)$ an analytic subset of $X$.

In complex dimension two, by the standard $J$-equation, complex M\"onge-Ampere equation correspondence observed in
\cite{FLSW}, one may  identify $\NullJ(\alpha,\beta)$
with the null locus of a nef and big class and also prove the partial high-order regularity holds out $\NullJ(\alpha,\beta)$. In higher dimension, \cite{SD1}  made important progress on proving that the set of optimal destablizing subvarieties $\NullJ(\alpha,\beta)$ is a finite union of analytic subvarieties of $X$. Our second theorem proves a analogous result  when $X$ is a toric manifold.

 \begin{theorem}\label{thm:toric}Let $X$ be toric K\"ahler manifold of complex dimension $n$ and ($\alpha,\beta$), not necessarily toric invariant, be a semi-stable pair of K\"ahler classes, then \begin{enumerate}
 \item $\NullJ(\alpha,\beta)$ is an analytic subset of $X$.  \item If moreover $(\alpha,\beta)$ are toric invariant classes, then there is a weak solution $T$ of the $J$-equation, which is smooth on the big torus $(\mathbb{C}^*)^n$ of $X$.
 \end{enumerate}
 \end{theorem}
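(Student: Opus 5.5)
We prove (1) and (2) of Theorem~\ref{thm:toric} separately. The idea for (1) is to reduce to torus-invariant subvarieties, of which there are only finitely many. For (2) we use the symmetry of the problem to run Murakami's construction inside the class of torus-invariant potentials, where it becomes a real equation on $\mathbb R^n$.

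\emph{Part (1).} Intersection numbers $\alpha^{m-1}\cdot\beta\cdot Z$ depend only on cohomology classes, and the connected group $(\mathbb C^*)^n$ acts trivially on $H^{1,1}(X,\mathbb R)$. Hence $\mu_Z$ is invariant along the orbit $t\cdot Z$, even when $\alpha,\beta$ are not toric invariant.

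Fix an irreducible $Z$ with $\mu_Z=\mu$ and set $\Theta_m:=\mu\alpha^m-m\alpha^{m-1}\beta$. Choose a one-parameter subgroup $\lambda$ that is generic with respect to the fan. The flat limit $Z_0=\lim_{t\to0}\lambda(t)\cdot Z$ is a torus-invariant effective cycle $\sum a_i V_i$, where the $V_i$ are orbit closures, the $a_i>0$, and $Z_0$ is homologous to $Z$. (In the toric K\"ahler setting one can iterate the limit over a basis of cocharacters, or use the Chow variety of cycles, which is compact for K\"ahler manifolds by Fujiki/Barlet.)

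Semi-stability gives $\Theta_m\cdot V_i\ge0$ for each $i$. Since $\sum a_i\,\Theta_m\cdot V_i=\Theta_m\cdot Z=0$, every $V_i$ satisfies $\Theta_m\cdot V_i=0$. Therefore each $V_i$ is optimally destabilizing, provided $\dim V_i=m\ge1$; this holds because $Z$ has positive dimension.

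The key step, and the expected obstacle, is to go back from $Z_0$ to $Z$. We want $Z\subset V$ for some torus-invariant optimally destabilizing $V$, or at least that $Z$ lies in a finite union of analytic sets built from the $V_i$. For this I would use the structure from \cite{SD1}: $\NullJ(\alpha,\beta)$ is a union of analytic subvarieties, and each maximal such piece is contained in the non-K\"ahler locus of the nef class $\mu\alpha-(m)\beta$ restricted to suitable strata. Concretely, I would show the following. If $Z$ meets the open orbit $(\mathbb C^*)^n$, then the whole orbit family $\{t\cdot Z\}$ consists of destabilizers, and their union is dense in a torus-invariant subvariety $W=\overline{(\mathbb C^*)^n\cdot Z}$. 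By a semicontinuity/closedness argument (the condition $\Theta\cdot Z=0$ is closed in the Chow space, and an irreducible $W$ swept out by destabilizing $Z$'s satisfies $\Theta_{\dim W}\cdot W=0$ via a fibration and mixed-volume comparison), $W$ itself is optimally destabilizing unless $W=X$.

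The case $W=X$ must be excluded. It would give a covering family of destabilizers, and integrating over the family would force $\Theta_m$ to vanish on a class that pairs positively with a K\"ahler class. This contradicts the existence of Murakami's solution with positive mass. Once this is done, $\NullJ$ equals the union of the finitely many torus-invariant destabilizing orbit closures, together with those of their lower-dimensional strata. It is therefore analytic.

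\emph{Part (2).} Take $\chi,\omega$ to be $T^n_{\mathbb R}$-invariant. By uniqueness, or by averaging, Murakami's solution $T$ from Theorem~\ref{thm:Rei} can be chosen invariant. On $(\mathbb C^*)^n$ write $T=\ddbar u$ and $\omega=\ddbar g$ with $u,g$ convex functions of $x=\log|z|$. The equation becomes the real equation
$$n\,\det(D^2u)\,\tr\bigl((D^2u)^{-1}D^2g\bigr)=\mu\det(D^2u)$$
on the set where $u$ is strictly convex, and its Legendre-dual form on the polytope $P_\alpha$ is the one studied by Collins--Sz\'ekelyhidi \cite{CS}.

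I would first show that $u$ is strictly convex on all of $\mathbb R^n$. A degenerate direction would produce a face of $P_\alpha$ along which the dual convex function is affine, and hence a torus-invariant destabilizer. That destabilizer lies in the toric boundary by Part (1), not in the open torus. Given strict convexity, the equation is uniformly elliptic and concave, $\tr(A^{-1}B)=\mu/n$, on compact subsets of $\mathbb R^n$. Interior $C^{2,\alpha}$ estimates of Evans--Krylov type, as used by Collins--Sz\'ekelyhidi, and bootstrapping then give smoothness and $D^2u>0$. So $T$ is a smooth K\"ahler form on $(\mathbb C^*)^n$.
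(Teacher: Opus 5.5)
\textbf{Part (1).} You have the right ingredients: the torus acts trivially on cohomology, cycles are compact, and semi-stability forces every component of a limit of destabilizers to be a destabilizer. But the step you flag as the obstacle is never carried out, and the route you propose for it does not work as stated.

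\begin{itemize}
\item The claim that $W=\overline{(\mathbb C^*)^n\cdot Z}$ satisfies $\Theta_{\dim W}\cdot W=0$ ``via a fibration and mixed-volume comparison'' has no argument behind it.
\item The exclusion of $W=X$ via ``Murakami's solution with positive mass'' has no argument either. Moreover, if $Z$ meets the open orbit then $W=X$ automatically, so your concrete case reduces entirely to this unproved exclusion.
\item The appeal to \cite{SD1} through the non-K\"ahler locus of $\mu\alpha-m\beta$ is not meaningful, since that class need not be nef for $m\ge2$.
\item Non-invariant destabilizers contained in the toric boundary are never treated.
\end{itemize}

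The missing idea is to push forward rather than go back. Let $Y_m$ be the union of all $m$-dimensional $Z$ with $\Theta_m\cdot Z=0$; it is torus-invariant by your first remark. Take $x\in Z\subset Y_m$ and $y\in\overline{(\mathbb C^*)^n\cdot x}$, and pick $\lambda_j$ with $\lambda_j\cdot x\to y$. The cycles $\lambda_j\cdot Z$ have constant $\alpha$-volume, so by Bishop--Lieberman compactness \cite{Lieberman} they subconverge to $\sum_\ell a_\ell W_\ell$ with $y$ in its support. Your semi-stability argument then gives $\Theta_m\cdot W_\ell=0$ for every $\ell$, hence $y\in Y_m$. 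So $Y_m$ is a union of orbit closures, of which there are finitely many, and it is analytic. This is the paper's proof.

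Two consequences for your plan. The correct target is $\overline{(\mathbb C^*)^n\cdot x}\subseteq Y_m$, not that orbit closures are destabilizing in their own dimension. And the case $Y_m=X$ need not be excluded, since $X$ is analytic.

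\textbf{Part (2).} The invariance of $T$ is not justified. Theorem~\ref{thm:Rei} gives existence only; uniqueness of weak solutions is part of the open Conjecture~\ref{conj:weak}. Averaging over $(S^1)^n$ does not preserve solutions of a nonlinear equation.

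More seriously, you run the regularity theory directly on the weak solution, which is a priori only a convex function. You then assert that strict convexity makes the equation uniformly elliptic, but it does not. The linearization of $A\mapsto\tr(A^{-1}B)$ at $A=D^2u$ has ellipticity constants controlled by both the smallest and the largest eigenvalue of $A$. So Evans--Krylov needs an interior \emph{upper} Hessian bound, and a merely convex $u$ gives no $C^{1,1}$ starting point for any bootstrap.

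The lower bound is the easy direction. For smooth solutions of $\tr_{\omega_\varepsilon}\chi_\varepsilon=n$ with $\chi_\varepsilon\ge c\,\omega$ uniformly, one gets $\omega_\varepsilon\ge (c/n)\,\omega$ for free. Your (unsupported) destabilizing-face argument therefore aims at the wrong estimate.

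The paper avoids these issues by working with the smooth invariant solutions $\omega_\varepsilon=\omega+\ddbar\psi_\varepsilon$ of the stable perturbed problems $(\alpha,\beta_\varepsilon)$ from \cite{CS}:
\begin{itemize}
\item convexity of the torus potentials on $\mathbb R^n$ yields locally uniform $C^0$ bounds;
\item \cite[Proposition 27]{CS} yields a uniform interior Hessian bound;
\item standard higher-order estimates then pass to the limit $T$.
\end{itemize}
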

Unfortunately, we are unable to prove high-order regularity of $T$ outside 
$\NullJ(\alpha,\beta)$ even in the toric invariant case.



\medskip

\textbf{Acknowledgements:} The author would like to thank Ved Datar for useful comments, Zakarias Sj\"ostr\"om Dyrefelt for pointing out that the $J$-ll locus in the previous version is essentially defined in his previous works as the set of optimally destablizing subvarieties  and Jian Song for many inspiring lectures on $J$-equations The author also thanks Chatgpt for bringing the reference \cite{McCleerey} to the author's attention and improving the exposition of the paper. The author is supported by National Key R\&D Program of China 2024YFA1014800 and NSFC No. 12401073.
 
\section{Proof of Theorem 1}In this section, we prove Theorem ~\ref{thm:semistable}. Before that, we sketch the proof of item ($1$) of Theorem ~\ref{thm:semistable}.
\begin{enumerate}
\item Step 1: Fix a normalized semi-stable pair $(\alpha,\beta)$ with $\alpha^{n-1}\cdot\beta=\alpha^{n}$. Then for any $0<\epsilon<1$, $(\alpha, \beta_\epsilon:=(1-\epsilon)\beta+\epsilon\alpha)$ is stable. Then by the Nakai-Moishezon criteria \cite{Song}, for every K\"ahler form
$\chi_\varepsilon\in\beta_\varepsilon$, there is a K\"ahler form
$\omega_\varepsilon\in\alpha$ such that
$$
  \chi_\varepsilon\wedge\omega_\varepsilon^{n-1}
  =
  \omega_\varepsilon^n,
  \qquad
  \tr_{\omega_\varepsilon}\chi_\varepsilon=n.
$$
\item Step 2: Fix a admissible test data $(\pi:Y\rightarrow X,D,L)$ as in the definition of minimal slope ~\ref{def:minislope}. The novelty of our approach is  one may construct a current $T$ with Divisorial singularity along $D$, through non pluripolar product  theory \cite{BEGZ}, in class $\pi^*\alpha$ such that
$$
  \int_Y\langle T^n\rangle=L^n\,\,\,\textnormal{and}\,\,\,
  \int_Y\pi^*\beta_\epsilon\wedge\langle T^{n-1}\rangle
  =
  [\pi^*\beta_\epsilon]\cdot L^{n-1}.
$$
Moreover, the potential of the current $T$ is a relative envelop function of a big and nef class.
\item Step 3: Use deep $C^{1,1,}$ regularity of relative envelop function \cite{JWN,McCleerey} and its support property, we prove the following key inequality 
$$ \int_Y\pi^*\beta_\epsilon\wedge\langle T^{n-1}\rangle
  \geq
  \int_Y\langle T^n\rangle.$$
\end{enumerate}
Combining steps $(2)$ and $(3)$, we conclude the proof by letting $\epsilon\rightarrow 0$.

\subsection{Semi-stability implies \texorpdfstring{$\zeta_{\min}(\alpha,\beta)= \mu(\alpha,\beta)$}{1}}
We record  he Nakai-Moishezon criteria for $J$- equation  on a compact K\"ahler manifold. The breakthrough was made by Chen \cite{Chen} and further extended to the projective case by Datar-Pingali \cite{DatarPingali} and to the K\"ahler case by Song \cite{Song}.

\begin{theorem}[\cite{Song}]
\label{thm:song}
Let $\alpha,\beta$ be K\"ahler classes on a compact K\"ahler manifold of dimension $n$, with
$$
  \alpha^{n-1}\cdot\beta=\alpha^n.
$$
Assume that for every irreducible analytic subvariety $Z\subset X$ of dimension
$1\le d<n$,
$$
  d\,\alpha^{d-1}\cdot\beta\cdot Z
  <
  n\,\alpha^d\cdot Z.
$$
Then for every K\"ahler form $\chi\in\beta$, there exists a K\"ahler form
$\omega\in\alpha$ such that
$$
  \chi\wedge\omega^{n-1}=\omega^n.
$$
\end{theorem}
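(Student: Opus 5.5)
Since this is the Nakai--Moishezon criterion of \cite{Song} (extending \cite{Chen,DatarPingali}), I would follow the now-standard strategy and not attempt a new route. The first step is to reduce solvability to a \emph{subsolution} condition via the Song--Weinkove criterion \cite{SongWeinkove}. With the normalization $\alpha^{n-1}\cdot\beta=\alpha^n$, the equation $\chi\wedge\omega^{n-1}=\omega^n$ is equivalent to $\tr_\omega\chi=n$. The $J$-flow starting from any Kähler form in $\alpha$ converges smoothly to a solution as soon as there is some $\omega'\in\alpha$ with
$$
  n\,\omega'^{\,n-1}-(n-1)\,\chi\wedge\omega'^{\,n-2}>0
$$
in the sense of $(n-1,n-1)$-forms. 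By the a priori estimates, this condition is also necessary. So it suffices to build such a subsolution $\omega'\in\alpha$ from the numerical hypothesis $d\,\alpha^{d-1}\cdot\beta\cdot Z<n\,\alpha^d\cdot Z$ for all $Z$ with $1\le d<n$.

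The second step is to produce the subsolution by induction on dimension, following Chen \cite{Chen}, and to argue by contradiction along a continuity path. I would consider the twisted $J$-equations
$$
  \chi\wedge\omega_t^{n-1}+t\,\omega_t^{\,n}...
$$
in the form $(1+c_t)\omega_t^n=\chi\wedge\omega_t^{n-1}+c_t\chi^n$, or more concretely shift $\alpha$ to $\alpha+t\beta$: for $t\gg1$ the pair is trivially solvable, and one decreases $t$. If solvability fails at a first time $t_0$, the uniform estimates (Weinkove, Song--Weinkove, and Chen's estimates in terms of subsolutions) break down. A concentration-of-mass argument in the spirit of Demailly--P\u{a}un then produces a closed positive current whose Lelong-type upper level sets $E_c$ are proper analytic subvarieties, on which the subsolution inequality degenerates.

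The third step is to use the \emph{strict} inequality on these subvarieties together with the inductive hypothesis. Applied to (resolutions of) the components $Z$ of the degeneracy locus, the strict inequality solves the lower-dimensional $J$-equation, and hence gives a subsolution in a neighborhood of $Z$. The key point is that stability on $X$ restricts to stability on $Z$ after normalizing, because the slope inequality is inherited by subvarieties of $Z$. These local subsolutions near the bad locus are then glued to the global current away from it by a regularized-maximum construction. Uniformity in the gluing requires the \emph{uniform} version of the hypothesis, namely that there is $\epsilon>0$ with $d\,\alpha^{d-1}\cdot\beta\cdot Z\le(n-\epsilon)\,\alpha^d\cdot Z$. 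On a Kähler manifold this is deduced from the strict condition by Song's argument showing the two notions coincide: one uses the compactness of the relevant cycle spaces (Barlet) for subvarieties of bounded $\alpha$-degree, plus an inductive lower bound.

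I expect the main obstacle to be the gluing and uniformity step on a \emph{non-projective} Kähler manifold. The mass concentration and the extension of subsolutions from a neighborhood of a singular subvariety $Z$ require Demailly's regularization and extension of Kähler currents from subvarieties (Collins--Tosatti type results), without any ample line bundles available. Passing from strict to uniform positivity also needs the cycle-space compactness rather than intersection theory on divisors. My first attempt would be to follow \cite{Song} closely: work with Kähler currents with analytic singularities, extend them from $Z$ using the Collins--Tosatti extension theorem, and patch them with regularized maxima.
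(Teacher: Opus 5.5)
The paper does not prove this statement. It quotes it from \cite{Song} (building on \cite{Chen,DatarPingali}) and uses it as a black box, so invoking \cite{Song} is all that is required. Your outline has the right architecture: the Song--Weinkove subsolution criterion, Chen's continuity path and mass concentration, induction on dimension, and gluing by regularized maxima. However, two of the steps you actually spell out would fail.

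First, the induction does not close as you formulate it. Near a $d$-dimensional component $Z$ of the degeneracy locus you need $\omega_Z\in\alpha|_Z$ with $n\,\omega_Z^{d-1}-(d-1)\,\chi\wedge\omega_Z^{d-2}>0$ on $Z$. This is a subsolution with the \emph{ambient} constant $n$. You propose to obtain it by solving the $J$-equation on $Z$, on the grounds that stability is inherited ``after normalizing''. It is not inherited. Solving the $J$-equation on $Z$ forces $\mu_W<\mu_Z$ for all $W\subsetneq Z$. The hypothesis only gives $\mu_W<n$, and $\mu_Z<n$ strictly. Since slopes are homogeneous in $\alpha$ and $\beta$, no rescaling helps. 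The inductive statement must instead concern a twisted equation with the ambient constant, for instance
\[
  n\,\omega^d=d\,\chi\wedge\omega^{d-1}+\kappa_Z\,\chi^d,
  \qquad
  \kappa_Z=\frac{n\,\alpha^d\cdot Z-d\,\alpha^{d-1}\cdot\beta\cdot Z}{\beta^d\cdot Z}>0,
\]
or Chen's general class $\omega^n=\sum_k c_k\chi^k\wedge\omega^{n-k}$ with $c_k\ge0$. For this equation:
the numerical condition on a $q$-dimensional $W\subsetneq Z$ is exactly $q\,\alpha^{q-1}\cdot\beta\cdot W<n\,\alpha^q\cdot W$, because the $\chi^d$ term does not see $W$. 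So this condition genuinely is inherited. Moreover, its cone condition is exactly the ambient subsolution inequality above. Relatedly, the path $(1+c_t)\omega_t^n=\chi\wedge\omega_t^{n-1}+c_t\chi^n$ you display is cohomologically inconsistent unless $\alpha^n=\beta^n$.

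Second, your route from strict to uniform positivity does not work. The defect ratio $(n\,\alpha^d\cdot Z-d\,\alpha^{d-1}\cdot\beta\cdot Z)/(\alpha^d\cdot Z)$ is invariant under scaling the cycle. So a sequence $Z_j$ along which it tends to $0$ may have $\alpha^d\cdot Z_j\to\infty$. Compactness of cycle spaces of bounded degree says nothing about such a sequence. The normalized limit of $[Z_j]/(\alpha^d\cdot Z_j)$ is merely a closed positive current, and no strict inequality is assumed on currents. Uniform positivity is a \emph{consequence} of the theorem: a smooth solution gives pointwise strict, hence uniform, positivity. It cannot be extracted a priori in this way. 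What the gluing actually needs is positivity on the finitely many components produced by mass concentration, and on subvarieties of their resolutions, where the pulled-back class is only semipositive. Handling this without Chen's uniform hypothesis is precisely the new content of \cite{DatarPingali,Song}. You would have to reproduce that argument rather than the compactness shortcut.
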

For later purpose, we also introduce the ample locus of a big and nef class.
\begin{definition}
Let $X$ be a compact K\"ahler manifold and 
$L\in H^{1,1}(X,\mathbb{R})$ be a big and nef class.
The \emph{ample locus} of $L$ is defined by
$$
\operatorname{Amp}(L)
:=
\left\{
x\in X \;\middle|\;
\begin{array}{l}
\text{there exists a K\"ahler current } T\in L
\text{ with analytic singularities}\\
\text{such that $T$ is smooth in a neighborhood of $x$}
\end{array}
\right\}.
$$
Set the non-K\"ahler locus of $L$ as the complement of ample locus
$$E_{\mathrm{nK}}(L)
=X\setminus \operatorname{Amp}(L).
$$
\end{definition}
There is an equivalent definition of non-K\"ahler locus due to the deep work of \cite{CollinsTosatti}, which we omit here. It is clear that $E_{\mathrm{nK}}(L)$ is a complex analytic subset of $X$.
The following lemma is well-known.
\begin{lemma}
\label{lem:VL-bounded}
Let $L$ be a big and nef class on a compact K\"ahler manifold,
$\theta_L\in L$ be a smooth representative, and 
$$
  V_L:=\sup\{v\in\PSH(Y,\theta_L):v\le0\}
$$
be the potential with minimal singularities.  Then $V_L$ is locally bounded
on the K\"ahler locus of $L$.
\end{lemma}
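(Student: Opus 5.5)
Since $x\in\Amp(L)$ means there is a K\"ahler current with analytic singularities in $L$ that is smooth near $x$, the plan is to use that current as a competitor in the envelope defining $V_L$, after normalizing it so that it is $\le 0$. Fix $x\in\Amp(L)$. By definition there is a K\"ahler current $T=\theta_L+\ddbar\psi\in L$ with analytic singularities that is smooth on a neighborhood $U$ of $x$. Then $\psi$ is a $\theta_L$-psh function, smooth, and in particular bounded, on $U$. It is bounded above on the compact manifold, since quasi-psh functions are upper semicontinuous. Put $v:=\psi-\sup_Y\psi$, so that $v\in\PSH(Y,\theta_L)$ and $v\le 0$. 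Hence $v$ is a competitor in the envelope, and $V_L\ge v$ everywhere. On $U$ this gives the lower bound $V_L\ge \inf_U\psi-\sup_Y\psi>-\infty$, possibly after shrinking $U$ to a relatively compact neighborhood on which $\psi$ is continuous.

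For the upper bound, $V_L\le 0$ holds trivially. More precisely, the usc regularization $V_L^*$ is itself a competitor, so $V_L=V_L^*\le 0$; this is the standard fact that the envelope is $\theta_L$-psh. Combining the two bounds, $V_L$ is bounded on the neighborhood $U$ of $x$. Since $x\in\Amp(L)$ was arbitrary and $\Amp(L)$ is open, $V_L$ is locally bounded on $\Amp(L)$, which is the K\"ahler locus of the statement.

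There is no real obstacle. The only points needing care are that the current provided by the definition has a potential bounded above globally, which follows from upper semicontinuity on a compact manifold, and that it is bounded below near $x$, which is the smoothness assumption. Nefness is not needed for this argument; bigness only ensures that $\Amp(L)$ is nonempty.
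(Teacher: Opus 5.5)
Your argument is correct and complete. The paper gives no proof, calling the lemma well-known, and your competitor argument is the standard justification behind that remark: normalize the potential $\psi$ of a K\"ahler current with analytic singularities that is smooth near $x$, so that $V_L\ge\psi-\sup_Y\psi$ near $x$, while $V_L\le 0$ trivially. The one step you leave implicit, that $\psi$ itself is smooth near $x$, holds because $\ddbar\psi=T-\theta_L$ is smooth there (elliptic regularity), and you are right that nefness plays no role.
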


We do some preparations for the proof of Theorem ~\ref{thm:semistable}.
The following lemma shows that to compute $\zeta_{\min}(\alpha,\beta)$, one may assume that the effective divisor $D$ in an admissible test data $(\pi,Y,D)$ has rational coefficients. This will be useful in Lemma ~\ref{lem:B}.

\begin{lemma}\label{lem:dq}

Let $\pi:Y\to X$ be a modification from a compact K\"ahler manifold and let
$D$ be an effective $\mathbb R$-divisor such that
$$
  L=\pi^*\alpha-[D]
$$
is big and nef.  Then there exist a further modification
$p:\widetilde Y\to Y$, with $q:=\pi\circ p$, and a sequence of effective
$\mathbb Q$-divisors $D_j$ on $\widetilde Y, j\in \mathbb N^+$ such that
$$
  L_j:=q^*\alpha-[D_j]
$$
is K\"ahler for every $j$ and as $j\rightarrow\infty$
$$
  [D_j]\longrightarrow p^*[D],\qquad
  L_j\longrightarrow p^*L
$$
in $H^{1,1}(\widetilde Y,\mathbb R)$.  In particular,
$$
  n\,\frac{L_j^{n-1}\cdot q^*\beta}{L_j^n}
  \longrightarrow
  n\,\frac{L^{n-1}\cdot\pi^*\beta}{L^n}.
$$
\end{lemma}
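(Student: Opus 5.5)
The plan is to perturb $D$ within the span of exceptional divisors of a further modification so that the class becomes Kähler and the coefficients become rational. The perturbation uses a relatively anti-ample exceptional divisor.

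\emph{Step 1: make $L$ Kähler up to an exceptional correction.} Since $L$ is big and nef, Collins--Tosatti/Boucksom give a Kähler current in $L$ with analytic singularities. After resolving these singularities by a modification $p:\widetilde Y\to Y$ (composed if necessary with further blow-ups so that $p$ is a composition of blow-ups with smooth centers), we obtain a decomposition
$$p^*L=\widehat\omega+[F],$$
with $\widehat\omega$ Kähler and $F$ an effective $\mathbb R$-divisor. An alternative route uses a single blow-up sequence: $\widetilde Y$ carries an effective divisor $E$ supported on the $p$-exceptional locus (and on the pullback of the non-Kähler locus) with $p^*A-[E]$ Kähler for some Kähler class $A$ on $Y$. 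For each $t>0$ the class $p^*L+t(p^*A-[E])$ is then Kähler. Since $L$ is big, $p^*L-\delta p^*A$ is still big for small $\delta$. Taking the Kähler-current decomposition of that class on a resolution, I obtain
$$p^*L=\delta p^*A+\widehat\omega+[F]=\delta\bigl(p^*A-[E]\bigr)+\widehat\omega+[F+\delta E].$$
Hence $p^*L-\varepsilon[F+\delta E]$ lies in the Kähler cone for small $\varepsilon$, because it is a positive combination of Kähler classes plus $(1-\varepsilon)[F+\delta E]$. This last form is wrong: positivity requires subtracting, not adding, the divisor. So the correct statement is
$$p^*L-\varepsilon'(\ldots)=(1-\varepsilon')p^*L+\varepsilon'\bigl(\widehat\omega+\delta(p^*A-[E])\bigr),$$
which I rewrite as nef plus Kähler, hence Kähler. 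Concretely, I set
$$L'_\varepsilon:=p^*L+\varepsilon(p^*A-[E])=q^*\alpha-\bigl[p^*D+\varepsilon E\bigr]+\varepsilon p^*A.$$
To absorb $\varepsilon p^*A$ into $q^*\alpha$, I will choose $A=\pi^*\alpha$ itself. This works because $\pi^*\alpha$ is big and nef on $Y$, and one can find $E$ effective with $q^*\alpha-[E]$ Kähler on a suitable $\widetilde Y$. Then
$$(1+\varepsilon)^{-1}L'_\varepsilon=q^*\alpha-(1+\varepsilon)^{-1}\bigl[p^*D+\varepsilon E\bigr]$$
is Kähler, and its divisor is effective and tends to $p^*D$ as $\varepsilon\to0$.

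\emph{Step 2: rationalize coefficients.} Fix $\varepsilon_j\to0$. Let $G_j:=(1+\varepsilon_j)^{-1}(p^*D+\varepsilon_jE)$, an effective $\mathbb R$-divisor with finitely many components. The Kähler cone is open, so perturbing each coefficient of $G_j$ by a sufficiently small amount keeps $q^*\alpha-[G_j]$ Kähler. I therefore choose a $\mathbb Q$-divisor $D_j$ with nonnegative coefficients within $1/j$ of those of $G_j$; components with coefficient zero stay zero, which preserves effectivity. Then $[D_j]\to p^*[D]$ and $L_j\to p^*L$. The slope convergence follows from continuity of intersection numbers, together with $L_j^n\to(p^*L)^n=L^n>0$ and $p^*$ preserving intersection numbers.

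\emph{Main obstacle.} The delicate point is producing $\widetilde Y$ and an effective $E$ with $q^*\alpha-[E]$ Kähler. The class $\pi^*\alpha$ is only big and nef, not Kähler, on $Y$. My first attempt is to take a Kähler current in $\pi^*\alpha$ with analytic singularities (Demailly regularization, with the non-Kähler locus contained in the $\pi$-exceptional set) and resolve its singular ideal by blow-ups. This yields $q^*\alpha=[\text{Kähler}]+[E]$ with $E$ effective and supported on exceptional divisors, which is exactly the needed statement.
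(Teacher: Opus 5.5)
Your proof is correct and is essentially the paper's. Your Kähler class $q^*\alpha-(1+\varepsilon)^{-1}[p^*D+\varepsilon E]$ is exactly the paper's $(1-t)\,p^*L+t\,(q^*\alpha-[E])$ with $t=\varepsilon/(1+\varepsilon)$, and the rational perturbation via openness of the Kähler cone and the slope convergence via the projection formula are also the same.

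The only variation is how you obtain the effective $E$ with $q^*\alpha-[E]$ Kähler: you resolve a Kähler current with analytic singularities in $\pi^*\alpha$, whereas the paper cites relative ampleness of blow-up exceptional divisors (Varouchas). The resolved current only has a smooth part bounded below by a multiple of the pullback of a Kähler form on $Y$, so you still need that same relatively anti-ample exceptional divisor to get a genuinely Kähler class. The abandoned computation in your Step 1 should simply be deleted.
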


\begin{proof}
Pick a higher modification $p:\widetilde Y\to Y$ and set $q=\pi\circ p:\widetilde Y\to X$.  Then there is an effective $q$-exceptional $\mathbb Q$-divisor $E$ such that
$$
  \gamma:=q^*\alpha-[E]
$$
is a K\"ahler class \cite{Varouchas} on $\widetilde Y$.  Put
$$
  \widetilde D:=p^*D,\qquad \widetilde L:=p^*L.
$$
For $0<t<1$, set
$$
  D_t:=(1-t)\widetilde D+tE,
  \qquad
  L_t:=q^*\alpha-[D_t]=(1-t)\widetilde L+t\gamma.
$$
Since $\widetilde L$ is nef and $\gamma$ is K\"ahler, $L_t$ is K\"ahler for every
$t>0$.

Choose a sequence of positive number $t_j\rightarrow 0$. By the openness of  K\"ahler cone and the density of rational numbers, one may assume that the coefficients of $D_{t_j}$ can be perturbed to nonnegative rational numbers and the perturbation is controlled by $t_j$. Thus, one may assume that $D_{t_j}$ already have rational coefficients.

The asserted convergence of slopes follows from continuity of intersection
numbers and the projection formula
$$
  (p^*L)^n=L^n,
  \qquad
  (p^*L)^{n-1}\cdot q^*\beta
  =L^{n-1}\cdot\pi^*\beta.
$$
\end{proof}

In the following lemma, using non-pluripolar product, we construct a current $T$ with divisorial singularity, which is useful to connect class $\pi^*\alpha$ and $\pi^*\alpha-[D]$ in a test data.
\begin{lemma}
\label{lemma:A}
Let $Y$ be a $n$ dimensional compact K\"ahler manifold, also let
\begin{enumerate}
\item  $D$ be an effective $\mathbb R$-divisor, $h_D$ be a smooth metric on the $\mathbb R$ line bundle $D$, $\theta_D$ be the curvature of $h_D$, $s_D$ be the defining section of $D$ and $\varphi_D\:=\log\|s_D\|^2$ be a
divisorial potential satisfying
$$
  \theta_D+\ddbar\varphi_D=[D],
$$
\item $\theta$ be a smooth closed
semipositive $(1,1)$-form with $\theta^n>0$,
\item  $
  L=[\theta]-D
$ be a big and nef class,
$\theta_L:=\theta-\theta_D\in L
$,  \begin{equation}\label{envelop}r=\sup\{s\in\PSH(Y,\theta_L):\varphi_D+s\le0\}^{*}\end{equation}
be the envelope function  and $$T:=\theta+\ddbar(\varphi_D+r)$$ be the associated current in class $[\theta]$.
\end{enumerate}
 
Then, for every smooth closed semipositive $(1,1)$-form $\Theta$, one has
$$
  \int_Y\langle T^n\rangle=L^n\,\,\,\textnormal{and}\,\,\,
  \int_Y\Theta\wedge\langle T^{n-1}\rangle
  =
  [\Theta]\cdot L^{n-1}.
$$
\end{lemma}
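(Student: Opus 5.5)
The key observation is that on the complement of $\operatorname{Supp}D$ the current $T$ coincides with the positive current $S:=\theta_L+\ddbar r\in L$. Both $\langle T^n\rangle$ and $\langle T^{n-1}\rangle$ are non-pluripolar products, so they put no mass on the pluripolar set $\operatorname{Supp}D$. Since $\theta_D+\ddbar\varphi_D=[D]$ vanishes off $D$, we get $T=S$ on $Y\setminus D$. By the locality of non-pluripolar products in the plurifine topology \cite{BEGZ}, this gives
$$
  \int_Y\langle T^n\rangle=\int_Y\langle S^n\rangle,\qquad
  \int_Y\Theta\wedge\langle T^{n-1}\rangle=\int_Y\Theta\wedge\langle S^{n-1}\rangle .
$$
We also need $T$ to be a positive current in $[\theta]$. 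Since $r\in\PSH(Y,\theta_L)$, we have $T=S+[D]\ge0$, and this holds whatever the singularities of $r$ are.

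It therefore suffices to show that $S$, equivalently $r$, has minimal singularities in $L$. Then the standard fact from \cite{BEGZ} applies: for a big class, currents with minimal singularities have full mass $\langle S^n\rangle=\operatorname{vol}(L)$, and for nef $L$ this equals $L^n$. The mixed version gives $\int\Theta\wedge\langle S^{n-1}\rangle=\langle L^{n-1}\rangle\cdot[\Theta]=L^{n-1}\cdot[\Theta]$, since the positive product of a nef class is the ordinary intersection.

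The minimal-singularity claim is the only real content. Write $V_L=\sup\{v\in\PSH(Y,\theta_L):v\le 0\}$. The function $\varphi_D=\log\|s_D\|^2$ is bounded above on $Y$, say $\varphi_D\le C$. Then $V_L-C$ is a competitor in \eqref{envelop}, because $\varphi_D+V_L-C\le 0$. Hence $r\ge V_L-C$.

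Conversely, $r$ itself is $\theta_L$-psh; the usc regularization preserves this, since the family is uniformly bounded above on the dense set where $\varphi_D>-\infty$. The constraint gives $r\le -\varphi_D$ off $D$. Because $r$ is quasi-psh, it is bounded above on the compact $Y$, so $r\le \sup r$, and thus $r-\sup r\le 0$ is a competitor for $V_L$. Therefore $r$ and $V_L$ differ by bounded functions, and $r$ has minimal singularities.

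The main obstacle is justifying the mixed-mass identity $\int\Theta\wedge\langle S^{n-1}\rangle=[\Theta]\cdot L^{n-1}$ for an arbitrary smooth semipositive $\Theta$. I would prove it by multilinearity. Write $\Theta=(\Theta+A\omega_Y)-A\omega_Y$ with a Kähler form $\omega_Y$. Each Kähler piece gives a current $\Theta+A\omega_Y+\ddbar 0$ with minimal singularities in its Kähler class. The monotonicity and full-mass results of \cite{BEGZ} for the multi-class non-pluripolar product then give $\langle L^{n-1}\rangle\cdot[\Theta+A\omega_Y]$. Finally, for nef $L$ the movable intersection $\langle L^{n-1}\rangle$ equals $L^{n-1}$.

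If needed, the top-degree case can be made fully explicit as follows. The currents $\theta_L+\epsilon\omega_Y+\ddbar V_{L+\epsilon}$ have bounded potentials because $L+\epsilon\{\omega_Y\}$ is Kähler. Their masses equal $(L+\epsilon\omega_Y)^n$, and the masses converge to those of $S$ as $\epsilon\to0$ by monotone convergence of non-pluripolar masses on the ample locus, using Lemma~\ref{lem:VL-bounded}.
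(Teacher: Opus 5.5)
Your proposal follows essentially the paper's proof: the two-sided comparison $V_L-C\le r\le V_L+\sup_Y r$ shows that $r$ has minimal singularities, the splitting $T=[D]+S$ lets the non-pluripolar products discard $[D]$, and the BEGZ fact that positive products of a nef class are ordinary cup products finishes the argument. There are minor points to tighten:
\begin{itemize}
\item The global upper bound that makes $r$ $\theta_L$-psh across $\operatorname{Supp}D$ does not follow from the pointwise bound $s\le-\varphi_D$ alone, since that bound blows up near $D$. It needs the compactness of sup-normalized $\theta_L$-psh functions, as in the paper.
\item Splitting $\Theta$ is unnecessary, because $\langle S^{n-1}\rangle$ is a closed current in the class $L^{n-1}$ and so pairs cohomologically with any smooth closed $\Theta$.
\item Your optional $\epsilon\to0$ argument does not replace the BEGZ full-mass statement. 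Weak convergence on $\Amp(L)$ only gives the inequality $\le L^n$, and it concerns $V_L$ rather than $r$.
\end{itemize}
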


\begin{proof}
We first justify that the envelope $r$ is indeed $\theta_L$-psh.  Put
$$
  \mathcal F
  :=
  \{s\in\PSH(Y,\theta_L):\varphi_D+s\le0\}.
$$
It is clear that the family $\mathcal F$ is nonempty.

 Fix a ball $W\Subset Y\setminus\operatorname{Supp}D$, then there is a
constant $C_W$ such that every $s\in\mathcal F$ satisfies
$$
  s\le-\varphi_D\le C_W
  \qquad\text{on }W.
$$ Then by \cite{Demailly-CAG}, $r$ is a $\theta_L$ psh function on $W$. We claim next that $\mathcal F$ is uniformly bounded above on $Y$.
If not, then  there are
$s_j\in\mathcal F$ with $M_j:=\sup_Ys_j\to+\infty$.  Set
$u_j=s_j-M_j$.  Then
$$
  u_j\in\PSH(Y,\theta_L),
  \qquad
  \sup_Yu_j=0.
$$

By  compactness, after passing to a
subsequence $u_j\to u$ in $L^1(Y)$,
$u\not\equiv-\infty$
\cite{Demailly-CAG}.  But on $W$,
$$
  u_j\le C_W-M_j\to-\infty
$$
uniformly, this contradicts the $L^1$ integrability. The uniform upper bound shows that $r$ is $\theta_L$ psh on $Y$ by Hartogs extension Theorem.

For $C\gg1$, the potential $V_L-C$ is admissible in the definition of
$r$, since $\varphi_D+V_L-C\le0$.  Hence
$$
  r\ge V_L-C.
$$
Conversely, since $V_L$ has minimal singularities in the class $L$, every
$\theta_L$-psh potential is bounded above by $V_L+O(1)$.  Thus
$$
  r\le V_L+C'
$$
for some constant $C'$.  Therefore $r$ and $V_L$ have the same
singularity type, and
$$
  R:=\theta_L+\ddbar r
$$
has minimal singularities in the big and nef class $L$.  Moreover
$$
  T
  =
  \theta+\ddbar(\varphi_D+r)
  =
  (\theta_D+\ddbar\varphi_D)+(\theta_L+\ddbar r)
  =
  [D]+R.
$$

Non-pluripolar products are local on the complement of pluripolar sets and
discard the divisorial factor $[D]$ \cite{BEGZ}. On $Y\setminus\operatorname{Supp}D$,
$$
  \langle T^k\rangle=\langle R^k\rangle
  \qquad 1\le k\le n
$$
and both sides extend trivially across $\operatorname{Supp}D$. The non-pluripolar product is closed \cite[Theorem~ 1.8]{BEGZ}.  Since
$R$ has minimal singularities in the big class $L$, its product
$\langle R^k\rangle$ represents the positive product
$\langle L^k\rangle$ in the sense of \cite[Definition~ 1.17]{BEGZ}.  Because
$L$ is nef, this positive product is the ordinary cup product
\cite[paragraph after Definition 1.17]{BEGZ}.  Thus,  for every $1\le k\le n$,
$$
  \{\langle R^k\rangle\}=L^k.
$$
This gives
$$
  \int_Y\langle T^n\rangle
  =
  \int_Y\langle R^n\rangle
  =
  L^n
$$
and
$$
  \int_Y\Theta\wedge\langle T^{n-1}\rangle
  =
  \int_Y\Theta\wedge\langle R^{n-1}\rangle
  =
  [\Theta]\cdot L^{n-1}.
$$
\end{proof}

In the following lemma, we further explore the properties of the current $T$ constructed above. The key is to use the regularity of the envelop function due to \cite{McCleerey}, which extends many previous works \cite{CZ,Tosatti,BermanD,JWN}. 
\begin{lemma}
\label{lem:B}
 Let $(Y,D,\pi)$ be a admissible test data for $(\alpha,\beta) $ with further assumption that $D$ has rational coefficients.   Let $\varphi_D,r,L,\theta$ be defined as in
Lemma~\ref{lemma:A} and $\Theta$ be a smooth
closed semipositive $(1,1)$-forms on $Y$.   set
$$
  u:=r+\varphi_D,\qquad T=\theta+\ddbar u.
$$
Suppose there is an analytic subset
$E\subset Y$, containing
$$
  \operatorname{Supp}D\cup\bigl(Y\setminus\Amp(L)\bigr),
$$
such that $\theta$ is K\"ahler on $Y\setminus E$,  and
$$\label{eqn:J}
  \tr_\theta\Theta=n$$

on $Y\setminus E$.  Then
$$
  \int_Y\Theta\wedge\langle T^{n-1}\rangle
  \ge
  \int_Y\langle T^n\rangle.
$$
\end{lemma}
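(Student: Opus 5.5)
The plan is to compute both sides pointwise on the open set $Y\setminus E$, using the $C^{1,1}$ regularity of the envelope $r$ and the fact that its Monge--Amp\`ere measure lives on the contact set. First, both $\langle T^n\rangle$ and $\Theta\wedge\langle T^{n-1}\rangle$ are non-pluripolar products. They therefore put no mass on the analytic (hence pluripolar) set $E$, so both integrals may be computed over $Y\setminus E$. As in the proof of Lemma~\ref{lemma:A}, on $Y\setminus\operatorname{Supp}D$ we have $T=R=\theta_L+\ddbar r$, and $\langle T^k\rangle=\langle R^k\rangle$ there.

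The second step is regularity. On $Y\setminus E\subset \Amp(L)\setminus\operatorname{Supp}D$, the obstacle $-\varphi_D$ in \eqref{envelop} is smooth. Since $D$ has rational coefficients, $L$ is a big and nef class of the type treated in \cite{McCleerey} (extending \cite{JWN,Tosatti,BermanD}), so I will invoke that work to get $r\in C^{1,1}_{loc}(Y\setminus E)$. Hence $u=r+\varphi_D\in C^{1,1}_{loc}(Y\setminus E)$. The current $T=\theta+\ddbar u$ then has $L^\infty_{loc}$ coefficients there, and its non-pluripolar products coincide with the pointwise a.e.\ wedge products of these coefficients, which are Bedford--Taylor products of bounded potentials.

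The third step identifies $T$ on the relevant set. The standard balayage argument for envelopes gives that $\langle R^n\rangle$ is supported on the contact set $K:=\{u=0\}$. Equivalently, $T^n=0$ on $(Y\setminus E)\setminus K$, since there the envelope is maximal. On $K$ the function $u\le 0$ attains its maximum $0$. Because $u\in W^{2,p}_{loc}$, we have $D^2u=0$ a.e.\ on the level set $K$, a Stampacchia-type fact. Thus $T=\theta$ a.e.\ on $K\setminus E$, which gives
\[
\int_Y\langle T^n\rangle=\int_{K\setminus E}\theta^n .
\]
On the other side, $\Theta\wedge\langle T^{n-1}\rangle$ is a positive measure, so
\[
\int_Y\Theta\wedge\langle T^{n-1}\rangle\ \ge\ \int_{K\setminus E}\Theta\wedge\theta^{n-1}
=\int_{K\setminus E}\frac{\tr_\theta\Theta}{n}\,\theta^n=\int_{K\setminus E}\theta^n ,
\]
where we used that $\theta$ is K\"ahler and $\tr_\theta\Theta=n$ on $Y\setminus E$. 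Combining the two displays gives the inequality.

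I expect the main obstacle to be the second step together with the support statement: the needed result is local $C^{1,1}$ regularity of the envelope on the ample locus minus $\operatorname{Supp}D$, and that its Monge--Amp\`ere measure is carried by the contact set, for a big and nef (not K\"ahler) class with a singular obstacle. I would first try to quote \cite{McCleerey} directly, viewing $-\varphi_D$ as a smooth barrier away from $\operatorname{Supp}D$; the rationality of $D$ is what makes that reference applicable. The support property I would derive from the usual local Perron argument, by solving a Dirichlet problem on small balls in $\{u<0\}\setminus E$, in which $u$ is maximal.
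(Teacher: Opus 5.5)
Your proposal is correct and follows essentially the paper's route: McCleerey's local $C^{1,1}$ regularity of $u=r+\varphi_D$ on $\Amp(L)\setminus\operatorname{Supp}D$, the fact that $\langle T^n\rangle$ is carried by the contact set $\{u=0\}$, and a pointwise comparison on that set. The paper differs in three small points. It first rewrites $u$ as the envelope $\mathcal P_\theta[\varphi_D](0)$ with prescribed singularity $\varphi_D$, and this is where rationality of $D$ enters, through the analytic singularities of $\varphi_D$. It quotes the support property from McCleerey's formula (1.4) rather than re-deriving it by balayage. Finally, it uses only $0\le T_{\mathrm{ac}}\le\theta$ at twice-differentiable contact points together with the eigenvalue inequality $\sum_j a_{j\bar j}\prod_{i\ne j}t_i\ge n\prod_i t_i$ for $t_i\in[0,1]$. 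Your Stampacchia argument instead gives the sharper $T_{\mathrm{ac}}=\theta$ a.e.\ on the contact set, so the comparison there reduces to the identity $\Theta\wedge\theta^{n-1}=\theta^n$.
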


\begin{proof}
 Firstly, we introduce the following standard    function
\begin{equation}\label{envelopu}
  u=\mathcal P_{\theta}[\varphi_D](0)
  :=
  \sup\{v\in\PSH(Y,\theta):v\le0,\ v\le\varphi_D+C_v\}^{*},
\end{equation}
and we claim that $u=\varphi_D+r.$

On one hand, if $s$ is an admissible function in the definition of $r$ (c.f. \eqref{envelop}), then $$\theta+\ddbar(s+\varphi_D)=(\theta_L+\ddbar s)+(\theta_D+\ddbar\varphi_D)=R+[D]\geq [D],$$ where $R$ is the positive current defined in Lemma~\ref{lemma:A}. Since $s$ is bounded above,  $s+\varphi_D$ is an admissible function in \eqref{envelopu}. Thus $s+\varphi_D\leq u$ and $r+\varphi_D\leq u$.
On the other hand, if
$v\le\varphi_D+C_v$, then $\theta+\ddbar v$ has divisorial part at least $D$.
By Siu decomposition $\theta+\ddbar v-[D]\ge0$.  Direct calculation yields that, on
$Y\setminus D$,
$$\theta_L+\ddbar(v-\varphi_D)=\theta-\theta_D+\ddbar(v-\varphi_D)=\theta+\ddbar v-[D]\geq 0.$$
Hence by Hartogs theorem,
$s=v-\varphi_D$ can be extended as an $\theta_L$-psh function across $D$ to $Y$. On $Y\setminus D$, one has
$\varphi_D+s=v\le0$ and $v$ is upper semi-continuous on $Y$, so $\varphi_D+s=v\le0$ on $Y$. Then $s\leq r$ as an admissible function. So $u\leq \varphi_D+r$.

We remark that  $D$ is a
$\mathbb Q$-divisor, after multiplying by a positive integer $m$ the
divisor $mD$ is integral, and
$$
  \varphi_D=\frac1m\log |s_{mD}|^2_{h}+O(1)
$$
has analytic singularities. Then
\cite[Theorem~1.1 and formula
~(1.4)]{McCleerey} of McCleerey applies directly to
$$
  \theta_{\mathrm{McCleerey}}=\theta,\qquad \gamma=\theta_D,\qquad \psi=\varphi_D,\qquad
  \text{obstacle }u=0.
$$
Indeed, $\varphi_D$ has analytic singularities, $[\theta]$ is big, $[\theta_D]=D$ is
pseudoeffective, and $[\theta]-[\theta_D]=L$ is a big and nef class.   Consequently $u$ has
locally bounded real Hessian on
$\Amp(L)\setminus\operatorname{Supp}D$. Set
$$
  U:=Y\setminus E.
$$
Since $E$ contains $\operatorname{Supp}D\cup(Y\setminus\Amp(L))$, we have
$$
  U\subset \Amp(L)\setminus\operatorname{Supp}D.
$$
Moreover, by McCleerey's formula (1.4), on $U$ one has
$$
  \langle T^n\rangle|_U
  =
  \mathbf{1}_{\{u=0\}}\,\theta^n|_U.
$$
In particular, $\langle T^n\rangle|_U$ charges no mass on 
$U\setminus\{u=0\}$.

 Since $u$ has locally bounded real Hessian on $U$, the current
$T|_U$ is represented by an $L^\infty_{\mathrm{loc}}$ $(1,1)$-form, which 
we denote  by $T_{\mathrm{ac}}$.  Thus $T=T_{\mathrm{ac}}$ as
currents on $U$, and the mixed Bedford--Taylor products on $U$ are
represented by the corresponding almost-everywhere coefficients of
$T_{\mathrm{ac}}$.

Let $q$ be a point where $u$ is twice differentiable. If $q$ further lies in the contact set $\{u=0\}$, then one has $\ddbar u(q)\le0$, because $u\le0$ and equality holds
on the contact set.  Therefore
$$0\le T_{\mathrm{ac}}(q)\le \theta(q).
$$

At such a point $q$, it follows from the assumption \eqref{eqn:J} that 
the following pointwise inequality holds:
$$
  \Theta\wedge T_{\mathrm{ac}}^{n-1}\ge T_{\mathrm{ac}}^n
$$
Indeed, one may diagonalize $T_{ac}$  with respect to $\theta$ and set
$$
  \theta=i\sum_{j=1}^n \,dz_j\wedge d\bar z_j,\quad
  \Theta=i\sum_{j=1}^n  a_{j,\bar k} \,dz_j\wedge d\bar z_k,\quad
  T_{\mathrm{ac}}=i\sum_{j=1}^nt_j \,dz_j\wedge d\bar z_j.
$$
Then $\sum_j a_{j,\bar j}=n$ and $0\le t_j\le1$.  The desired pointwise inequality
$$
  \Theta\wedge T_{\mathrm{ac}}^{n-1}\ge T_{\mathrm{ac}}^n
$$
is equivalent, after dividing ($n-1$)!, to
$$
  \sum_{j=1}^na_{j,\bar j}\prod_{i\ne j}t_i
  \ge
  n\prod_{i=1}^nt_i.
$$
Since $t_j\le1$, one has $\prod_{i\ne j}t_i\geq
\prod_i t_i$.  The inequality follows from $\sum_ja_{j,\bar j}=n$.

Thus on the contact set  $\{u=0\}$, in the Lebesgue-a.e. sense,
$$
  \Theta\wedge T_{\mathrm{ac}}^{n-1}\ge T_{\mathrm{ac}}^n.
$$
On $U\setminus\{u=0\}$, the measure $\langle T^n\rangle$ vanishes, while
$\Theta\wedge\langle T^{n-1}\rangle$ is positive. Finally since the
non-pluripolar products do not charge $E=Y\setminus U$, integration over $Y$ proves
the lemma.
\end{proof}

\medskip
\begin{proof}[\textbf{Proof of item (1) of Theorem~\ref{thm:semistable}}]

Assume $(\alpha,\beta)$ is a pair of semi-stable K\"ahler forms. Without losing of generality, one may assume that $\mu=n$, i.e. 
$$
  \alpha^{n-1}\cdot\beta=\alpha^n.
$$

For $0<\varepsilon<1$, set
$$
  \beta_\varepsilon=(1-\varepsilon)\beta+\varepsilon \alpha.
$$
Then
$$
  \alpha^{n-1}\cdot\beta_\varepsilon=\alpha^n,
$$
and for every irreducible analytic $d$-dimensional subvariety $Z\subset X$, $1\le d<n$,
semi-stability implies that
$$
  d\,\alpha^{d-1}\cdot\beta\cdot Z\le n\,\alpha^d\cdot Z.
$$
It is straightforward to check that ($\alpha,\beta_\varepsilon$) is stable by using the following identity
$$
\begin{aligned}
  n\alpha^d\cdot Z-d\alpha^{d-1}\cdot\beta_\varepsilon\cdot Z
  &=
  (1-\varepsilon)
  \bigl(n\alpha^d\cdot Z-d\alpha^{d-1}\cdot\beta\cdot Z\bigr)\\
  &\quad+\varepsilon(n-d)\alpha^d\cdot Z
  >0.
\end{aligned}
$$
By Theorem~\ref{thm:song}, for every K\"ahler form
$\chi_\varepsilon\in\beta_\varepsilon$, there is a K\"ahler form
$\omega_\varepsilon\in\alpha$ such that
$$
  \chi_\varepsilon\wedge\omega_\varepsilon^{n-1}
  =
  \omega_\varepsilon^n,
  \qquad
  \tr_{\omega_\varepsilon}\chi_\varepsilon=n.
$$
Now fix a test data ($\pi,Y,D$), the pulled back
forms
$$
  \theta_\varepsilon=\pi^*\omega_\varepsilon,\qquad
  \Theta_\varepsilon=\pi^*\chi_\varepsilon.
$$
 are smooth semipositive forms on
$Y$. Moreover outside $\Exc(\pi)$, $\theta_\varepsilon$ is K\"ahler and
$$
  \tr_{\theta_\varepsilon}\Theta_\varepsilon=n.
$$

Apply Lemma~\ref{lemma:A} to the class
$[\theta_\varepsilon]=\pi^*\alpha$ and the divisor $D$, one has a positive current
$$
  T_\varepsilon=\theta_\varepsilon+\ddbar u_\varepsilon
,$$

where $u_\varepsilon:=\sup\{v\in\PSH(Y,\theta_\varepsilon):v\le0,\ v\le\varphi_D+C_v\}^{*},$
such that
$$
  \int_Y\langle T_\varepsilon^n\rangle=L^n,
\qquad
  \int_Y\Theta_\varepsilon\wedge
  \langle T_\varepsilon^{n-1}\rangle
  =
  \pi^*\beta_\varepsilon\cdot L^{n-1}.
$$

Let
$$
  E=\Exc(\pi)\cup\operatorname{Supp}D\cup E_{\mathrm{nK}}(L),
  \qquad E_{\mathrm{nK}}(L):=Y\setminus\Amp(L),
$$
where $\Amp(L)$ denotes the ample locus of the big and nef class $L$.
By lemma~\ref{lem:dq}, one may assume that the divisor $D$ has rational coefficient.
Then, on $Y\setminus E$, the assumption of Lemma~\ref{lem:B} hold.
Hence
$$
  \pi^*\beta_\varepsilon\cdot L^{n-1}
  =
  \int_Y\Theta_\varepsilon\wedge
  \langle T_\varepsilon^{n-1}\rangle
  \ge
  \int_Y\langle T_\varepsilon^n\rangle
  =
  L^n.
$$
Letting $\varepsilon\to0$, one has
$$
  \pi^*\beta\cdot L^{n-1}\ge L^n.
$$

Thus $$
  \zeta_{\min}(\alpha,\beta)\geq\mu.
$$  It is also trivial that 
$$
  \zeta_{\min}(\alpha,\beta)\leq\mu.
$$
This proves item ($1$) of Theorem~\ref{thm:semistable}.
\end{proof}
\subsection{\texorpdfstring{$\zeta_{\min}(\alpha,\beta)< \mu(\alpha,\beta)$}{1} implies unstability}

Now we proceed to prove item ($2$) of Theorem~\ref{thm:semistable}. Therefore for the K\"ahler pair ($\alpha,\beta$), there is a destablizing analytic subvariety $Z\subset X$. We shall fix a test data associated to $Z$ as follows:

\textbf{Set-up:} Let $X$ be a compact complex manifold, let $Z\subset X$ be an
irreducible analytic subvariety of dimension $m$, and set
$$
  c=\codim_X Z=n-m.
$$
Assume $c\ge2$.  Let
$$
  p:\widehat X=\Bl_{\mathcal I_Z}X\to X
$$
be the  blow-up of the reduced ideal sheaf of $Z$, let $E$ be
the total exceptional Cartier divisor such that
$\mathcal I_Z\mathcal O_{\widehat X}=\mathcal O_{\widehat X}(-E)$. Note that over the Zariski open set 
$Z_{\rm reg}$, $p$ is a $\mathbb{P}^{c-1}$ bundle.

\noindent Also let
$\rho:Y\to\widehat X$ be a log resolution of $(\widehat X,E)$ such that $\rho$ is
an isomorphism over 
$p^{-1}(Z_{\rm reg})$, and set
$$
  \pi=p\circ\rho,\qquad F=\rho^*E.
$$
\begin{lemma}
\label{lem:des} Undet the set-up above. If $\theta_F$ is any smooth closed real $(1,1)$-form representing the
first Chern class $c_1(\mathcal O_Y(F))$, then for every smooth closed
$(n-j,n-j)$-form $\eta$ on $X$, one has
\begin{itemize}
\item For $1\le j<c$,
$$
  \int_Y \theta_F^j\wedge\pi^*\eta=0.
$$
\item For $j=c$,
$$
  \int_Y \theta_F^c\wedge\pi^*\eta
  =
  (-1)^{c-1}\int_Z\eta .
$$

\end{itemize}

\end{lemma}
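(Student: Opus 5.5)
Since $\theta_F$ represents $c_1(\mathcal O_Y(F))=[F]$ and $\pi^*\eta$ is closed, the integrals depend only on cohomology classes. We may therefore compute $\int_Y [F]^j\cdot\pi^*[\eta]$ intersection-theoretically. The plan is to write $[F]^j=[F]|_F^{\,j-1}$ pushed forward from $F$, so that
$$
\int_Y \theta_F^j\wedge\pi^*\eta=\int_F (\theta_F|_F)^{j-1}\wedge(\pi|_F)^*\eta .
$$
On the right-hand side, $F$ is a divisor (snc after the log resolution, with multiplicities allowed), and the integral is understood with those multiplicities. The map $\pi|_F$ sends $F$ onto $Z$, because $\pi^{-1}(Z)=\operatorname{Supp}F$: $\rho$ is a resolution and $E=p^{-1}(Z)$ set-theoretically.

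For $1\le j<c$ we argue by dimension counting. The form $(\pi|_F)^*\eta$ is the pullback of an $(n-j,n-j)$-form under a map with image $Z$ of dimension $m=n-c$. Since $n-j>n-c=m$, this pullback vanishes identically on every component of $F$, being a pullback of a form of degree exceeding $\dim Z$ through $Z$. More precisely, on each component $F_i$ of $F$ the map factors through $Z$ (or a resolution of $Z$), and the pullback of $\eta|_{Z_{\rm reg}}$ vanishes since its degree exceeds $2m$. This is made rigorous by arguing on a dense open set where $Z$ is smooth and extending by continuity. Hence the integral is $0$.

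For $j=c$, the form $\eta$ has bidegree $(m,m)$, i.e.\ top degree on $Z$. We use the projection formula: $\pi_*\bigl([F]^{c-1}|_F\bigr)$ is a degree-$0$ class on $Z$, that is, a multiple $k[Z]$, and we must compute $k=(-1)^{c-1}$. Only the generic fibre over $Z_{\rm reg}$ matters, because components of $F$ not dominating $Z$ contribute zero by the same degree argument as above. Over $Z_{\rm reg}$, $\rho$ is an isomorphism and $p$ is the blow-up of a smooth center. Thus $F=E$ there, $E=\mathbb P(N_{Z/X})$, and $\mathcal O(E)|_E=\mathcal O_{\mathbb P(N)}(-1)$. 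Taking a fibre $\mathbb P^{c-1}$ over a general point $z$, we get $\int_{\mathbb P^{c-1}}c_1(\mathcal O(-1))^{c-1}=(-1)^{c-1}$. Fubini (integration along fibres over $Z_{\rm reg}$, which has full measure) then gives $\int_Y\theta_F^c\wedge\pi^*\eta=(-1)^{c-1}\int_Z\eta$.

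The main obstacle is making this rigorous with smooth forms rather than algebraic cycles, given that $Z$ is singular and $F$ may have extra components over $Z_{\rm sing}$. For this I would use the formula $[F]=\theta_F+\ddbar\log\|s_F\|^2$ and induct to replace one factor of $\theta_F$ by the current $[F]$, justified because $\pi^*\eta\wedge\theta_F^{j-1}$ is smooth. I would then show that the contribution over $\pi^{-1}(Z_{\rm sing})$ vanishes: there the image has dimension $<m$, so the pulled-back $(n-j,n-j)$-form of degree $\ge 2m$ restricts to zero on those components.
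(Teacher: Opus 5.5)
Your proof is correct. It shares two steps with the paper: the opening reduction (Lelong--Poincar\'e, giving $\int_Y\theta_F^j\wedge\pi^*\eta=\int_F\theta_F^{j-1}\wedge\pi^*\eta$), and the final evaluation via $\mathcal O(F)|_{F_0}=\mathcal O_{\mathbb P(N)}(-1)$ and fibre integration over $Z_{\rm reg}$. The difference lies in the middle step.

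The paper stays on $X$. It pushes forward the closed order-zero current $[F]\wedge\theta_F^{j-1}$, and Demailly's first support theorem kills it for $j<c$. For $j=c$ the second support theorem gives $\pi_*([F]\wedge\theta_F^{c-1})=\lambda[Z]$, so $\lambda$ is read off by testing against forms compactly supported over $Z_{\rm reg}$, and the components of $F$ over $Z_{\rm sing}$ never have to be examined.

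You work on $Y$, component by component. The pullback $(\pi\circ\iota_{F_i})^*\eta$ vanishes pointwise because its bidegree exceeds $\dim\pi(F_i)$. This handles all $j<c$, and every non-dominating component when $j=c$, leaving the unique reduced dominating component $F_0$, where Fubini over $Z_{\rm reg}$ (a set of full measure in $F_0$) finishes. Your route is more elementary (linear algebra and Fubini, no normal-current theory). The price is that it uses the structure of $F$: smooth components from the log resolution, and exactly one component, of multiplicity one, dominating $Z$.

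One tightening. ``A dense open set where $Z$ is smooth'' is empty for components lying over $Z_{\rm sing}$, and a map into $Z$ need not factor through a resolution of $Z$. The clean uniform justification is that the complex rank of $d(\pi|_{F_i})$ is everywhere at most $\dim\pi(F_i)$: the maximal rank is attained generically and equals $\dim\pi(F_i)$ by Remmert's rank theorem. Hence the pullback of any $(k,k)$-form with $k>\dim\pi(F_i)$ vanishes identically on $F_i$, with no continuity argument needed.
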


\begin{proof}
Write the effective Cartier divisor $F$ as
$$
  F=\sum_i b_iF_i,\qquad |F|=F_{\rm red}=\bigcup_i F_i,
$$
where the $F_i$ are the irreducible reduced components in the support of
$F$, and $b_i$ is the vanishing order of  $F$
along $F_i$.
Also let 
$$
  [F]=\sum_i b_i[F_i],
$$
be the current of integration associated to the divisor $F$.

Let $L=\mathcal O_Y(F)$ be the associated line bundle and let $s_F\in H^0(Y,L)$ be the canonical section with $\operatorname{div}(s_F)=F$.  Choose a smooth
Hermitian metric $h_L$ on $L$ whose Chern form is $\theta_F$.   Set $\varphi_F:=\log\|s_F\|_{h_L}^2$, by the
Lelong--Poincar\'e equation,  \cite{Demailly-CAG} one has
$$
  [F]=\theta_F+\ddbar\varphi_F
$$
as closed $(1,1)$-currents.  Since $\theta_F$  is a closed real form, one has
$$
  [F]\wedge\theta_F^{j-1}-\theta_F^j
  =
  \ddbar\bigl(\varphi_F\theta_F^{j-1}\bigr)
$$
Let $\eta$ be a  smooth closed
$(n-j,n-j)$-form on $X$,  
 then $$
\int\ddbar\bigl(\varphi_F\theta_F^{j-1}\bigr)\wedge \pi^*\eta=0.$$
  Equivalently
\begin{equation}\label{BBB}\int_Y\theta_F^j\wedge\pi^*\eta
  =
  \int_F\theta_F^{j-1}\wedge\pi^*\eta.
\end{equation}
For the proper map $\pi$, the push-forward of a current $S$ is defined by duality:
$$
  \bigl\langle \pi_*S,\eta\bigr\rangle
  =
  \bigl\langle S,\pi^*\eta\bigr\rangle .
$$
The closed push-forward current  
$\pi_*([F]\wedge\theta_F^{j-1})$ has bidimension $(n-j,n-j)$ and is
supported on $Z$.   
If $j<c$, then
$$
  \dim Z=n-c<n-j.
$$
The first support theorem for closed currents on analytic sets
\cite[Theorem~III.2.10]{Demailly-CAG} implies that
$$
  \pi_*([F]\wedge\theta_F^{j-1})=0.
$$
By \eqref{BBB}, one has
$$\int_Y\theta_F^j\wedge\pi^*\eta=0.$$


For $j=c$, set
$$
  T:=\pi_*([F]\wedge\theta_F^{c-1}).
$$
Then $T$ is a closed current of bidimension $(m,m)$
supported on $Z$.  The second support theorem \cite[Theorem~III.2.13]{Demailly-CAG} implies that  
$$
  T=\lambda[Z]
$$
for some constant $\lambda$.  It suffices to compute $\lambda$ on  $Z_{\rm reg}$.   Put $U=X\setminus (Z\setminus Z_{\rm reg})$, so $U\cap Z=Z_{\rm reg}$.
Over $U$, the restriction of $F$ is reduced
along the unique exceptional divisor lying above $U\cap Z$, which we denote by $F_0$.  Then
$$
  q:=\pi|_{F_0}:F_0\longrightarrow Z_{\rm reg}
$$
is the projective bundle
$$
  F_0\simeq \mathbb P(N_{Z_{\rm reg}/X}).
$$
Here $\mathbb P(N_{Z_{\rm reg}/X})$ parametrizes lines in the normal bundle and
\begin{equation}\label{AAA}\mathcal O_Y(F)|_{F_0}=\mathcal O_{\mathbb P(N_{Z_{\rm reg}/X})}(-1). \end{equation}
All other components of $F$, except $F_0$, map into $Z\setminus Z_{\rm reg}$.
Thus, when restricting to the open set $U$,
$$
  T=\pi_*([F_0]\wedge\theta_F^{c-1}).
$$
Let $\gamma$ be a smooth $(m,m)$-form compactly supported in $U$, then
$$
  \langle T,\gamma\rangle
  =
  \int_{F_0}\theta_F^{c-1}\wedge q^*(\gamma|_{Z_{\rm reg}}).
$$
By \eqref{AAA} and the functoriality of first Chern class, the restriction $\theta_F|_{F_0}$ represents
\begin{equation}\label{CCC}c_1(\mathcal O_Y(F)|_{F_0})
  =
  c_1(\mathcal O_{\mathbb P(N_{Z_{\rm reg}/X})}(-1)).\end{equation}
By the projection formula for integration along the fibres of $q$,
$$
  \int_{F_0}\theta_F^{c-1}\wedge q^*(\gamma|_{Z_{\rm reg}})
  =
  \int_{Z_{\rm reg}}q_*(\theta_F|_{F_0}^{c-1})\,\gamma .
$$
The above integrals are well defined since $\gamma$ is compactly supported. 
 Then it follows from 
$$
  \int_{\mathbb P^{c-1}}(c_1(\mathcal O(-1))^{c-1}=(-1)^{c-1},
$$
and \eqref{CCC} that 
$$q_*(\theta_F|_{F_0}^{c-1})=(-1)^{c-1}.$$
Thus
$$
  \int_{F_0}\theta_F^{c-1}\wedge q^*(\gamma|_{Z_{\rm reg}})
  =
  (-1)^{c-1}\int_{Z_{\rm reg}}\gamma.
$$
Thus $T|_{Z_{\rm reg}}=(-1)^{c-1}[Z_{\rm reg}]$ and
$\lambda=(-1)^{c-1}$.  

Combining this with identity \eqref{BBB}, for every smooth closed
$(m,m)$-form $\eta$ on $X$ one has
$$
  \int_Y\theta_F^c\wedge\pi^*\eta
  =
  \langle T,\eta\rangle
  =
  (-1)^{c-1}\int_Z\eta .
$$
This completes the proof.
\end{proof}

\begin{lemma}
\label{lem:small-blowup-class}
In the setting of Lemma~\ref{lem:des}, assume $X$ is
compact K\"ahler and $\alpha$ is a K\"ahler class.  Then for all sufficiently small $t>0$,
$$
  L_t=\pi^*\alpha-tF
$$
is nef and big on $Y$. Moreover, $$
  L_t^n=\alpha^n+O(t^c)>0.
$$
\end{lemma}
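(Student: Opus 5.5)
The plan is to exhibit, for small $t>0$, a smooth closed semipositive $(1,1)$-form in $L_t=\pi^*\alpha-t[F]$, which gives nefness. Then expand $L_t^n$ with Lemma~\ref{lem:des} to get the volume estimate, and deduce bigness from $L_t^n>0$ for a nef class.

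\emph{Step 1: a quasi-psh potential on $X$ with singularities along $Z$.} Fix a K\"ahler form $\omega\in\alpha$. Cover $X$ by finitely many open sets $U_k$ on which $\mathcal I_Z$ is generated by holomorphic functions $f_{k,1},\dots,f_{k,N_k}$, and take a partition of unity $\{\chi_k\}$ subordinate to $\{U_k\}$. Set
$$
  \psi:=\log\Bigl(\sum_k\chi_k\sum_i|f_{k,i}|^2\Bigr).
$$
Since any two local generating systems are related by holomorphic matrices, on each $U_k$ we have $\psi=\log\sum_i|f_{k,i}|^2+O(1)$, where the $O(1)$ term is smooth off $Z$ and bounded. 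The standard computation for such glued potentials then gives $\ddbar\psi\ge -C\omega$ for some constant $C$. This is the step I expect to need the most care, although it is classical (Demailly's regularization-type estimates).

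\emph{Step 2: pull back to $Y$.} By construction $\mathcal I_Z\mathcal O_Y=\mathcal O_Y(-F)$. So if $g$ is a local equation of $F$, then $\pi^*f_{k,i}=g\cdot h_{k,i}$ with the $h_{k,i}$ having no common zero. Hence
$$
  \pi^*\psi=\varphi_F+h,
$$
with $h$ smooth on all of $Y$, where $\varphi_F=\log\|s_F\|^2_{h_L}$ is as in Lemma~\ref{lem:des}. Consider the smooth closed form
$$
  \eta_t:=\pi^*\omega-t\theta_F+t\,\ddbar h\in \pi^*\alpha-t[F]=L_t.
$$
On $Y\setminus|F|$ we have $\eta_t=\pi^*\omega+t\,\pi^*\ddbar\psi\ge(1-Ct)\pi^*\omega\ge0$ for $t<1/C$. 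By continuity $\eta_t\ge0$ on all of $Y$, so $L_t$ is nef.

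\emph{Step 3: volume expansion.} Expand
$$
  L_t^n=\sum_{j=0}^n\binom nj(-t)^j\int_Y\theta_F^j\wedge\pi^*\omega^{n-j}.
$$
The $j=0$ term is $\alpha^n$. By Lemma~\ref{lem:des} applied with $\eta=\omega^{n-j}$, the terms with $1\le j<c$ vanish. The $j=c$ term equals $\binom nc(-1)^c(-1)^{c-1}t^c\,\alpha^m\cdot Z$. The remaining terms are $O(t^{c+1})$. Hence
$$
  L_t^n=\alpha^n-\binom nc t^c\,\alpha^m\cdot Z+O(t^{c+1})=\alpha^n+O(t^c),
$$
which is positive for $t$ small. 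A nef class with positive top self-intersection is big (Demailly--P\u{a}un), which finishes the proof.
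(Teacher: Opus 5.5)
Your proof is correct, but it reaches nefness by a different route from the paper. The paper works on the blow-up $\widehat X$. Since $\mathcal O_{\widehat X}(-E)$ is $p$-ample, Varouchas' result makes $[p^*\omega]-t\,c_1(E)$ a K\"ahler class on the possibly singular K\"ahler space $\widehat X$ for small $t>0$, and pulling it back by $\rho$ gives a nef class on $Y$. Bigness then follows from Demailly--Paun, exactly as in your last step.

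You instead build an explicit smooth semipositive representative on $Y$. The quasi-psh potential $\psi$ with analytic singularities along $\mathcal I_Z$ pulls back to $\varphi_F+h$ with $h$ smooth, because $\mathcal I_Z\mathcal O_Y=\mathcal O_Y(-F)$ is invertible. Hence $\pi^*\omega+t\,\ddbar\pi^*\psi$ extends across $|F|$ to a smooth form $\eta_t\ge0$ in $L_t$ for $t<1/C$. Your $h$ is essentially the Fubini--Study-type metric on $\mathcal O_Y(-F)$ induced by the local generators, i.e.\ the concrete mechanism behind the relative ampleness the paper invokes.

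What your route buys is self-containedness. It needs no K\"ahler classes on singular spaces or their pullbacks, and no relative ampleness: for nefness, any modification from a compact K\"ahler manifold that principalizes $\mathcal I_Z$ would do. It also gives an explicit range of $t$. The paper's route is shorter given the citation, and yields the (unused) stronger fact that the class is K\"ahler on $\widehat X$.

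Your Step~3 is also more precise than the paper, which writes ``by continuity''. That alone only gives $\alpha^n+O(t)$, whereas your expansion via Lemma~\ref{lem:des} actually justifies the $O(t^c)$ and the leading term $-\binom nc t^c\,\alpha^m\cdot Z$ used later.

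On the step you flagged, write $|f_k|^2=\sum_i|f_{k,i}|^2$. After Cauchy--Schwarz, the only delicate term in $\ddbar\log\bigl(\sum_k\chi_k|f_k|^2\bigr)$ is
$-\sum_k\chi_k^{-1}\,i\partial\chi_k\wedge\bar\partial\chi_k\,|f_k|^2/\sum_l\chi_l|f_l|^2$.
This is bounded below by $-C\omega$ because $|\partial\chi_k|^2\le C\chi_k$ for smooth nonnegative $\chi_k$ (Glaeser's inequality), and the $|f_k|^2$ are mutually comparable on overlaps of the supports. Replacing $\chi_k$ by $\chi_k^2$ makes this immediate.
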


\begin{proof}
The line bundle $\mathcal O_{\widehat X}(-E)$ is $p$-ample by the
construction of the blow-up.  Hence by \cite{Varouchas}, if $\omega\in\alpha$ is
K\"ahler, then
$$
  [p^*\omega]-tc_1(E)
$$
is a K\"ahler class in the sense of Grauert on the K\"ahler space $\widehat X$  for all sufficiently
small $t>0$.  Pulling this class back by the resolution
$\rho:Y\to\widehat X$ gives the nef class
$\pi^*\alpha-tF$ on $Y$.  By continuity 
$$
  L_t^n=\alpha^n+O(t^c)>0
$$
for $t>0$ sufficiently small.  Since $Y$ is
compact K\"ahler and $L_t$ is nef with positive top self-intersection, $L_t$
is big by the Demailly--Paun criterion \cite{DP}.
\end{proof}

\begin{proof}[\textbf{Proof of item (2) of Theorem~\ref{thm:semistable}}]
Since $(\alpha,\beta)$ is not semi-stable, there exists an irreducible analytic
subvariety $Z\subset X$ of dimension $m$, $1\le m<n$, such that
\begin{equation}\label{des}
  m\,\alpha^{m-1}\cdot \beta\cdot Z
  >
  \mu\,\alpha^m\cdot Z.
\end{equation}
Let $c=n-m$. If $c=1$, then $Z$ is an effective Cartier divisor. For all sufficiently small  $t>0$,
$$
  L_t=\alpha-tZ
$$
is a K\"ahler class.  Taking $\pi=\operatorname{id}_X$ and the
divisor $D=tZ$, one has
$$
  L_t^n
  =
  \alpha^n
  -
  n t\,\alpha^{n-1}\cdot Z
  +O(t^2),
$$
and
$$
  L_t^{n-1}\cdot \beta
  =
  \alpha^{n-1}\cdot \beta
  -
  (n-1)t\,\alpha^{n-2}\cdot \beta\cdot Z
  +O(t^2).
$$
Therefore
$$
  n\frac{L_t^{n-1}\cdot \beta}{L_t^n}
  =
  \mu+
  \frac{n}{\alpha^n}
  \left[
    \frac{\mu}{n}\,n\alpha^{n-1}\cdot Z
    -
    (n-1)\alpha^{n-2}\cdot \beta\cdot Z
  \right]t
  +O(t^2).
$$
The coefficient of $t$ is negative by the destablizing inequality \eqref{des} (Note here $m=n-1$).  Hence the slope of
$L_t$ is strictly smaller than $\mu$ for all sufficiently small 
$t>0$.

If $c\ge2$.  Let
$$
  p:\widehat X=\Bl_{\mathcal I_Z}X\to X
$$
be the blow-up of the reduced ideal of $Z$, let $E$ be the
total exceptional Cartier divisor defined by
$\mathcal I_Z\mathcal O_{\widehat X}=\mathcal O_{\widehat X}(-E)$, choose
a log resolution $\rho:Y\to\widehat X$ of $(\widehat X,E)$ as in
Lemma~\ref{lem:des}, and set
$$
  \pi=p\circ\rho,\qquad F=\rho^*E.
$$
By Lemma~\ref{lem:small-blowup-class},
$$
  L_t=\pi^*\alpha-tF
$$
is big and nef for all sufficiently small $t>0$.  Choose such a small
$t>0$; then $(\pi,tF)$ is an admissible test in the definition of minimal slope $\zeta_{\min}(\alpha,\beta)$.

By Lemma~\ref{lem:des},
$$
  L_t^n
  =
  \alpha^n
  -
  \binom{n}{c}t^c\,\alpha^m\cdot Z
  +O(t^{c+1}),
$$
and
$$
  L_t^{n-1}\cdot\pi^*\beta
  =
  \alpha^{n-1}\cdot \beta
  -
  \binom{n-1}{c}t^c\,\alpha^{m-1}\cdot \beta\cdot Z
  +O(t^{c+1}).
$$
Set
$$
  a=\binom{n}{c}\alpha^m\cdot Z,\qquad
  b=\binom{n-1}{c}\alpha^{m-1}\cdot \beta\cdot Z.
$$
Since $\alpha^{n-1}\cdot \beta/\alpha^n=\mu/n$,
$$
  n\frac{L_t^{n-1}\cdot\pi^*\beta}{L_t^n}
  =
  \mu+
  \frac{n}{\alpha^n}
  \left(
    \frac{\mu}{n}a-b
  \right)t^c
  +O(t^{c+1}).
$$
The coefficient of $t^c$ is negative exactly when $b>\frac{\mu}{n}a$, which is further equivalent to
$$
  m\,\alpha^{m-1}\cdot \beta\cdot Z
  >
  \mu\,\alpha^m\cdot Z.
$$
The above inequality is exactly the destabilizing inequality \eqref{des}, hence for all sufficiently small
 $t>0$,
$$
  n\frac{L_t^{n-1}\cdot\pi^*\beta}{L_t^n}<\mu.
$$
In particular $\zeta_{\min}(\alpha,\beta)<\mu$.
\end{proof}

\section{Analyticity of the set of optimally destablizing subvarieties}
In this section, we prove that the set of optimally destablizing  subvarieties $\NullJ(\alpha,\beta)$ associated to a semi-stable pair $(\alpha,\beta)$ is finite union of torus invariant subvarieties when $X$ is a smooth toric manifold, which finishes the proof of Theorem~\ref{thm:toric}. We remark that such a kind of result was proved in the two-dimensional complex by \cite{SD1} (see also \cite{FLSW}).

\begin{lemma}
Let $X$ be a smooth compact toric K\"ahler manifold.  If
$(\alpha,\beta)$ is $J$-slope semi-stable, then
$\NullJ(\alpha,\beta)$ is analytic.  More precisely, for each
$m=1,\ldots,n-1$, the equality locus
$$
  Y_m:=
  \bigcup_{\substack{Z\subset X\ \mathrm{irreducible}\\
          \dim Z=m\\
          \Theta_m\cdot Z=0}} Z
$$
is a finite union of torus orbit closures.
\end{lemma}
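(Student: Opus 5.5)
The plan is to use the torus action to degenerate an arbitrary $m$-dimensional subvariety to torus-invariant cycles, and to exploit that the torus acts trivially on cohomology. Throughout, write $\Theta_m:=\mu\alpha^m-m\alpha^{m-1}\beta\in H^{m,m}(X,\mathbb R)$. Semi-stability says exactly that $\Theta_m\cdot Z\ge0$ for every irreducible $m$-dimensional $Z$. A compact toric Kähler manifold is projective, so we may freely use Chow varieties and flat limits; alternatively, Barlet's cycle space works directly in the Kähler setting.

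\textbf{Step 1 (invariance of the pairing).} Let $T=(\mathbb C^*)^n$ act on $X$. Since $T$ is connected, each $t\in T$ acts trivially on $H^*(X,\mathbb R)$. Hence $\Theta_m\cdot(t\cdot Z)=\Theta_m\cdot Z$ for every $t\in T$. More generally, intersection numbers with a fixed class are constant along any connected family of $m$-cycles: the cycle class is locally constant in the Chow/Barlet space.

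\textbf{Step 2 (degeneration to invariant cycles).} Choose a generic one-parameter subgroup $\lambda:\mathbb C^*\to T$ and take the limit cycle $Z_0=\lim_{s\to0}\lambda(s)\cdot Z$. Repeating this with further one-parameter subgroups (a Borel fixed-point argument in the Chow variety, or induction on the dimension of the stabilizer), we reach a $T$-invariant effective cycle
$$
Z_\infty=\sum_i a_iV_i,\qquad a_i>0,
$$
where each $V_i$ is an $m$-dimensional torus orbit closure. By Step 1, $\Theta_m\cdot Z=\sum_i a_i\,\Theta_m\cdot V_i$, and each summand is $\ge0$ by semi-stability. Consequently, $\Theta_m\cdot Z=0$ forces $\Theta_m\cdot V_i=0$ for every $i$. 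The same conclusion holds for every component of every intermediate limit cycle, since each such limit is an effective cycle of the same class, all of whose components have nonnegative pairing.

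\textbf{Step 3 (saturation).} Suppose $\Theta_m\cdot Z=0$, and let $W=\overline{T\cdot Z}$. Then $W$ is irreducible, being the closure of the image of $T\times Z$, and it is $T$-invariant, so $W$ is a torus orbit closure. We claim $W\subset Y_m$.
\begin{itemize}
\item Points of $T\cdot Z$ lie on the translates $t\cdot Z$, which satisfy $\Theta_m\cdot(t\cdot Z)=0$ by Step 1.
\item A boundary point $x\in W\setminus T\cdot Z$ is a limit of points $t_k\cdot z_k$. By compactness of the Chow variety, after passing to a subsequence $t_k\cdot Z$ converges to an effective cycle containing $x$. By Step 2, every component of that cycle has $\Theta_m$-pairing zero, so $x\in Y_m$.
\end{itemize}
Hence $Y_m$ is the union of the orbit closures $\overline{T\cdot Z}$ taken over all $Z$ with $\Theta_m\cdot Z=0$. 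Since $X$ has only finitely many torus orbits, this is a finite union of orbit closures, hence analytic. Taking the union over $m$ gives the analyticity of $\NullJ(\alpha,\beta)$.

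\textbf{Main obstacle.} I expect the hardest point to be the rigorous handling of limits in Steps 2 and 3. One must check that the limit of $t_k\cdot Z$ is an effective cycle in the same cohomology class and that it contains the limit point $x$. This needs properness of the Chow/Barlet space over compact sets and the fact that supports of limit cycles are Hausdorff limits of supports. I would first try projective Chow-variety compactness together with Bishop's theorem, and fall back on Barlet's cycle space if one insists on staying in the Kähler category.
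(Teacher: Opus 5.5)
Your proposal is correct and follows essentially the same route as the paper. The common ingredients are: the torus acts trivially on cohomology, so $\Theta_m$ is invariant; Bishop--Lieberman (or Chow) compactness applies to the translates $t_k\cdot Z$; semi-stability forces every component $W$ of the limit cycle to satisfy $\Theta_m\cdot W=0$; and there are only finitely many torus orbits.

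Your Step~2 (degenerating to a fully $T$-invariant cycle via one-parameter subgroups) is not actually needed. Step~3 uses only its closing observation about components of limit cycles, and Step~3 is exactly the paper's argument, applied to $\overline{T\cdot Z}$ rather than to $\overline{T\cdot x}$.
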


\begin{proof}
The torus $T\simeq(\mathbb C^*)^n$ is connected, so every
element $\lambda\in T$ is isotopic to the identity. Thus 
$\lambda$ acts trivially
on cohomology
$$
  \lambda^*\alpha=\alpha,\qquad \lambda^*\beta=\beta .
$$
Thus the class $\Theta_m:=\mu\alpha^m-m\alpha^{m-1}\beta$ is also fixed by $\lambda$.  Hence, if
$Z\subset X$ is an irreducible $m$-dimensional subvariety satisfying
$$
  \Theta_m\cdot Z=0,
$$
then its translate $\lambda\cdot Z$ also satisfies
$$
  \Theta_m\cdot (\lambda\cdot Z)=0 .
$$
Hence $Y_m$ is $T$-invariant.

We now prove that $Y_m$ is saturated by torus orbit closures. Fix an irreducible $m$-dimensional equality
subvariety $Z\subset X$ such that
$$
  \Theta_m\cdot Z=0 .
$$
Pick a point $x\in Z$ and let
$ y\in \overline{T\cdot x}$
 be a point in the complex analytic closure of $T\cdot x$. Choose a sequence $\lambda_j\in T$ such that
$\lambda_j\cdot x\to y.$

Note that the cycles $\lambda_j\cdot Z$ have the same homology class, this implies that  $\alpha$-volumes
$$
  \int_{\lambda_j\cdot Z}\alpha^m
$$
are constant and in particular uniformly bounded.  By Bishop-Lieberman's compactness for cycles on a compact
K\"ahler manifold \cite{Lieberman}, after passing to a subsequence, the cycles $\lambda_j\cdot Z$
converge to an effective $m$-cycle
$$
  Z_\infty=\sum_\ell a_\ell W_\ell,
 $$
where  $W_\ell$ are irreducible and reduced $m$-dimensional analytic
subvarieties and   $a_\ell>0$ are positive constants. It is clear that $y$ lies in the
support of $Z_\infty$.  Also, by continuity of integration over
cycles, one has
$$
  0
  =
  \Theta_m\cdot Z
  =
  \Theta_m\cdot Z_\infty
  =
  \sum_\ell a_\ell\,\Theta_m\cdot W_\ell .
$$
By $J$-slope semi-stability, for every $\ell$, one has
$$
  \Theta_m\cdot W_\ell\geq0,
$$
 which implies that
$$
  \Theta_m\cdot W_\ell=0.
$$
Thus every component $W_\ell$ of the limit cycle $Z_\infty$ is contained in $Y_m$.  Thus $y\in Y_m$ and we
 have proved that whenever $Y_m$ meets a torus orbit, it contains the closure of that
orbit.

Note that a compact toric manifold has only finitely many torus orbits, which implies that
$Y_m$ is a finite union of torus orbit closures.  Each orbit closure is
an analytic subvariety of $X$, so $Y_m$ is analytic.  Finally,
$$
  \NullJ(\alpha,\beta)
  =
  \bigcup_{m=1}^{n-1}Y_m
$$
is a finite union of analytic subsets.  This finishes the proof that
$ \NullJ(\alpha,\beta)$ is an analytic subset of $X$.
\end{proof}
\begin{proof}[\textbf{Proof of Theorem~\ref{thm:toric}}]
Next we address the high-order partial regularity of the weak solution $T$ in the large torus $(\mathbb{C}^*)^n$. For $0<\varepsilon<1$, perturb  the semi-stable pair $(\alpha,\beta)$ to the following stable pairs
$$
  \beta_\varepsilon=(1-\varepsilon)\beta+\varepsilon \alpha.
$$
Fix two K\"ahler forms $\omega,\chi$ in classes $\alpha,\beta$ separately.
By Collins-Sz\'ekelyhidi
\cite{CS}, for every K\"ahler form
$$\chi_\varepsilon=(1-\varepsilon)\chi+\varepsilon\omega\in\beta_\varepsilon,$$ there is a K\"ahler form
$\omega_\varepsilon=\omega+\ddbar\psi_\varepsilon\in\alpha$ such that
$$
  \chi_\varepsilon\wedge\omega_\varepsilon^{n-1}
  =
  \omega_\varepsilon^n,
  \qquad
  \sup{\psi_\varepsilon}=0.
$$

By the compactness of $\omega$-$\PSH$ functions, after passing to a subsequence, $\psi_\varepsilon\rightarrow \psi$ in the $L^1$ sense. When restricted to the large torus $(\mathbb{C}^*)^n$, $\psi_\varepsilon$ are smooth convex functions. Fix an open  convex $\Omega\subset\mathbb R^n$, then Rockafellar's Theorem \cite{Rockafellar} implies that $\psi_\varepsilon$ are uniformly bounded after shrinking $\Omega$. Then it follows from \cite[Proposition 27]{CS} that $\psi_\varepsilon$ has a uniform Hessian bound on any compact subset of $\Omega$. Finally, once we have uniform Hessian bound, the higher-order regularity follows by a standard argument.

\end{proof}


\end{document}